\providecommand{\U}[1]{\protect\rule{.1in}{.1in}}
\newtheorem{theorem}{Theorem}
\newtheorem{corollary}[theorem]{Corollary}
\newtheorem{example}[theorem]{Example}
\newtheorem{lemma}[theorem]{Lemma}
\newtheorem{proposition}[theorem]{Proposition}
\newtheorem{remark}[theorem]{Remark}
\begin{document}

\title{ Commutativity of central sequence algebras }

\date{\today}

\author{Dominic Enders}
\author{Tatiana Shulman}





\begin{abstract}
The question of which separable C*-algebras have abelian central sequence algebras was raised and studied by Phillips (\cite{Phillips}) and Ando-Kirchberg (\cite{AndoKirchberg}). In this paper we give a complete answer to their question:

\medskip

A separable C*-algebra $A$ has abelian central sequence algebra if and only

if $A$ satisfies Fell's condition.

\medskip

\noindent Moreover, we introduce a higher-dimensional analogue of Fell's condition and show that it completely characterizes subhomogeneity of central sequence algebras.

\noindent In contrast, we show that any non-trivial extension by compact operators has not only non-abelian but not even residually type I central sequence algebra. In particular its central sequence algebra is not type I and not residually finite-dimensional (RFD).

\noindent Our techniques extensively use properties of nilpotent elements in C*-algebras.
\end{abstract}

\maketitle

\section*{Introduction}
Studying the center is a key step in (C*-)algebra structure theory.
In the context of operator algebras one further looks at central sequences, i.e. bounded sequences $(x_n)$ of elements of a C*-algebra $A$ satisfying $$[x_n, a]\to 0, \;\text{for any}\; a\in A.$$ (Here for von Neumann algebras one naturally means the convergence with respect to tracial norms, while for C*-algebras it is with respect to the C*-norm).

This notion of central sequences has its origin in von Neumann algebra theory and turned out to be extremely fruitful there.
In fact, the central sequence algebra was a crucial tool in several breakthrough results in von Neumann algebra theory and we refer the reader to \cite{Ioana} and the references therein for a more detailed discussion on this topic.
In the context of this paper, a special focus was put on identifying which $II_1$-factors have trivial central sequence algebra (these are  $II_1$-factors without the so-called property Gamma) or which have abelian central sequence algebra (those for which it is not abelian are known to be the McDuff factors).

On the C*-algebra side, the study of central sequences was initiated shortly after the success of their von Neumann algebra counterparts.
Akemann and Pedersen proved that a C*-algebra $A$ has trivial central sequence algebra if and only if $A$ is a continuous trace algebra (\cite{AkemannPedersen}). The question of when the central sequence algebra of a separable C*-algebra is abelian also arose, and many significant partial results were obtained in \cite{Phillips}.

Two decades later, the concept of central sequence algebras gained momentum again, starting from the seminal work of Kirchberg \cite{Kirchberg}  -- this time in connection with the classification program for simple nuclear C*-algebras.
In particular, the concept of strongly self-absorbing C*-algebras (\cite{TomsWinter}) and the tensorial absorption of such in terms of embeddings into central sequence algebras have become an indispensable tool in the classification program.

Progress on the question of when a central sequence algebra is abelian appeared in the work of Ando and Kirchberg (\cite{AndoKirchberg}) where they proved that for any non-type I C*-algebra its central sequence algebra is non-abelian.
Ando and Kirchberg further provided examples (similar to the ones constructed  in \cite{PhDthesis}) showing that central sequence algebras of  type I C*-algebras can be both abelian and non-abelian.
However, the general question of which C*-algebras have abelian central sequence algebra remained open.

In this paper we give a complete answer to this question. More generally, we characterize those C*-algebras whose central sequence algebra are subhomogeneous.\\

\noindent {\bf Main Theorem} Let $A$ be a separable C*-algebra and $k\in \mathbb N$. Then the central sequence algebra  $F(A)$ of $A$  is $k$-
subhomogeneous if and only if $A$ satisfies Fell's condition of order $k$.\\

A C*-algebra satisfies Fell's condition if each of its irreducible representations $\pi_0$ satisfies Fell's condition, meaning that there exist $b\in  A^+$ and an open neighbourhood $U$ of $\pi_0$ in $\hat A$ such that $\pi(b)$ is a rank-one projection whenever $\pi \in U$ (e.g. see \cite{Blackadar}, \cite{FellProperty}).
The class of C*-algebras satisfying Fell's condition is known to coincide with the class of type $I_0$-algebras (\cite{FellProperty}) which were introduced by Pedersen  as C*-algebras generated by their abelian elements (see \cite{Blackadar}).
In this paper we introduce Fell's condition of order $n$ and the type $I_n$-algebras as a natural generalization of Fell's condition and type $I_0$-algebras respectively. To obtain our characterization of C*-algebras with $n$-subhomogeneous central sequence algebra, we prove that these conditions are all equivalent (Proposition 16).

\medskip

The proof of the Main Theorem above is obtained from 3 auxiliary steps listed below, each requiring its own set of techniques.
These techniques, while being quite different, have one feature in common -- they all extensively use properties of nilpotent elements in C*-algebras.
The reason being that commutativity (and similarly subhomogenity) can efficiently be captured in terms of the existence of nilpotent elements with prescribed order.

\medskip

\begin{itemize}
\item Step 1: Show that if $A$ is a C*-algebra of operators strictly containing all compact operators, then $F(A)$ is not $n$-subhomogeneous for any $n\in \mathbb N$.
\end{itemize}

\medskip

\noindent For $n=1$ and unital $A$, this step was done by Phillips (\cite{Phillips}).
 Here, in fact, we obtain a far reaching generalization of this result:\\

 \noindent {\bf Theorem 9.} Let $A \subset B(H)$ be a separable C*-algebra such that $A \supsetneq K(H)$. Then $F(A)$ is not residually type I.\\

\noindent Our proof is also more direct in the sense that it doesn't require Connes technique for automorphism groups or the connection between automorphism groups and central sequences in general, which was the main tool in \cite{Phillips}.

\medskip

\begin{itemize}
\item Step 2: to prove that if $A$ is CCR and $F(A)$ is $n$-subhomogeneous, then $A$ satisfies Fell's condition of order $n$.
\end{itemize}

\medskip

\noindent In this step we show that without a suitable bound on the local rank of elements in $A$ (local in the sense of the spectrum $\hat{A}$), it is possible to exhibit approximately central, nilpotent elements of high order.
 This is achieved by pulling back nilpotent elements through high-dimensional representations while preserving approximate centrality.

\medskip

\begin{itemize}
\item Step 3: Show that if $A$ is type $I_{n-1}$, then $F(A)$ is $n$-subhomogeneous.
\end{itemize}

\medskip

\noindent In this step we make heavy use of liftability of nilpotent elements.
This way we can transfer information about commutativity / subhomogenity from the central sequence algebra $F(A)$ back to the algebra $A$ itself.
Furthermore, we exploit stability of nilpotent elements to squeeze asymptotically central nilpotent elements into subhomogeneous subalgebras which eventually leads to the desired dimensional constraints.

\medskip

The paper is organized as follows.
In Section 1 we give preliminaries on central sequences and necessary information on properties of nilpotent elements in C*-algebras.
Section 2 is dedicated to Step 1, i.e. studying central sequences in extensions of compact operators. In particular, Theorem 9 mentioned above is proved there.
In section 3 we introduce Fell's condition of higher rank and type $I_n$-algebras and prove the equivalence of Fell's condition of order $n$ and of being of type $I_{n-1}$.
In Section 4 we do Step 2, that is we prove that if $A$ is CCR and $F(A)$ is $n$-subhomogeneous, then $A$ satisfies Fell's condition of order $n$.
In Section 5 we do Step 3, that is we prove that if $A$ is type $I_{n-1}$, then  $F(A)$ is $n$-subhomogeneous.
In Section 6 we prove the Main Theorem and deduce its consequences for automorphisms and derivations of C*-algebras satisfying Fell's condition.

\medskip

\textbf{Acknowledgements.}
The work of the second-named author was supported by the Swedish Research Council and by the Polish National Science Centre grant under the contract number 2019/34/E/ST1/0017.

\section{Preliminaries}

\subsection{Central sequence algebras}
 A central sequence in a C*-algebra is a bounded
 sequence which commutes asymptotically with each element of the algebra. "Asymptotically" is meant with respect to a given filter; in particular  in \cite{Kirchberg} it was meant with respect to a fixed free ultrafilter, while  in earlier works (e.g. \cite{AkemannPedersen}, \cite{Phillips})
 one used the usual convergence (that is the filter of all cofinite sets).

 Fix a filter $\omega$ on $\mathbb N$.
 Let $A$ be a C*-algebra. We denote by $A_{\infty}$ the C*-algebra of all bounded sequences with entries in $A$. Elements of $A_{\infty}$ will be written as $(x_n)$.
 The ideal $c_{\omega}$ of $A_{\infty}$ is defined as
 $$c_{\omega} = \{(x_n)\in A_{\infty}\;|\; \|x_n\|\to_{\omega} 0\}.$$ The {\it C*-ultraproduct algebra} $A_{\omega}$ is defined by $A_{\omega} = A_{\infty}/c_{\omega}.$ The canonical surjection $A_{\infty} \to A_{\omega}$ will be denoted by $\pi_{\omega}$. The elements $a\in A$ will be naturally identified with the images of the constant sequences $(a, a, \ldots)$ in $A_{\omega}$.
 Let $A'\bigcap A_{\omega}$ be the relative commutant of $A$ in $A_{\omega}$, i.e. elements of $A'\bigcap A_{\omega}$ are represented by {\it central sequences}, that is sequences $(x_n)$ satisfying $$[x_n, a]\to_{\omega} 0, \;\text{for any}\; a\in A.$$
 Furthermore, consider the annihilator of $A$ in $A_\omega$, i.e.
 $$Ann(A, A_{\omega}) = \{\pi_{\omega}((x_n))\;|\; \|x_na\|+\|ax_n\|\to_{\omega} 0, \;\text{for any }\;a\in A\},$$
 which is an ideal in $A_\omega$.
 Dividing out this ideal we obtain the {\it central sequence algebra} $F(A, \omega)$:  $$F(A, \omega) = (A'\bigcap A_{\omega})/Ann(A, A_{\omega})$$
 Note that for unital $A$ we have $Ann(A,A_\omega)=0$, so that the definition of $F(A, \omega)$ reduces to $F(A, \omega)=A'\bigcap A_\omega$ in this case.

 \medskip

 In  \cite{Phillips}, for a unital C*-algebra $A$, a central sequence $(x_n)$ in $A$ was called {\it hypercentral}  if
$$[x_n, y_n]\to 0$$ for any central sequence $(y_n)$ in $A$.
 This definition can be extended to the non-unital case as follows: a central sequence $(x_n)$ is hypercentral if $$\|[x_n, y_n]a\| + \|a[x_n, y_n]\|\to 0,$$ for any central sequence $(y_n)$ and any $a\in A$ (see also \cite{Lupini} where such sequences were called strict-hypercentral).

 \medskip

 The following question was first raised in \cite{Phillips}: $$\text{When are all central sequences hypercentral?}$$
 Clearly this is a question of when $F(A, \omega)$ is abelian where $\omega$ is the filter of all cofinite sets.
 Not surprisingly, this question turns out to be equivalent to the main question of \cite{AndoKirchberg},  asking when $F(A, \omega)$ is abelian, where $\omega$ is a free ultrafilter.
 The following proposition shows the equivalence of these two questions, as well as a quantitative version thereof.

 \begin{proposition}\label{epsilon-delta} Let $A$ be a  separable C*-algebra and let $A_1$ denote its unit ball. Let $\omega$ be a filter on $\mathbb N$ containing the filter of all cofinite sets.  The following are equivalent:
 \medskip

	 1) $F(A, \omega)$ is abelian;
\medskip


	 2) For any finite set $F\subset A_1$ and $\epsilon>0$ there exist a finite set $G\subset A_1$ and $\delta>0$ such that
	 		\[
		\text{
			$\begin{array}{l}
			\max_{a\in G}\|[a,x]\|<\delta \\
			\max_{a\in G}\|[a,y]\|<\delta
			\end{array}
		\Rightarrow
		 \|[x,y]f\|<\epsilon$ for all $f\in F$}.
		\]
	\medskip
Consequently, if $F(A, \omega)$ is abelian for one such  filter, then it is abelian for any such filter.
 \end{proposition}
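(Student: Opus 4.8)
The plan is to prove the two implications separately; the final ``Consequently'' clause is then immediate, since condition~(2) is a statement about $A$ alone, with no mention of the filter. So once (1)$\Rightarrow$(2) is known for one filter $\omega$ containing the cofinite filter and (2)$\Rightarrow$(1) for another, filter-independence of abelianness follows formally.

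For (2)$\Rightarrow$(1) I would take two central sequences $(x_n),(y_n)$ representing elements $x,y\in A'\cap A_\omega$; rescaling and truncating representatives, we may assume $\|x\|,\|y\|\le 1$ with entries $x_n,y_n\in A_1$, and the goal is to show $[x,y]\in Ann(A,A_\omega)$, i.e. $\|[x_n,y_n]a\|+\|a[x_n,y_n]\|\to_\omega 0$ for every $a\in A$; by scaling it is enough to treat $a\in A_1$. Given such an $a$ and $\epsilon>0$, apply (2) to the finite set $F=\{a,a^*\}$ to obtain $G$ and $\delta$. Since $(x_n)$ central forces $(x_n^*)$ central (via $[x_n^*,b]=-[x_n,b^*]^*$), and likewise for $y$, the set $S$ of those $n$ for which $\max_{b\in G}\|[b,x_n]\|$, $\max_{b\in G}\|[b,y_n]\|$, $\max_{b\in G}\|[b,x_n^*]\|$ and $\max_{b\in G}\|[b,y_n^*]\|$ are all $<\delta$ lies in $\omega$, being a finite intersection of $\omega$-sets. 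For $n\in S$, applying the implication in (2) to the pair $(x_n,y_n)$ gives $\|[x_n,y_n]a\|<\epsilon$, and applying it to the pair $(x_n^*,y_n^*)$ gives $\|[x_n^*,y_n^*]a^*\|<\epsilon$, which equals $\|a[x_n,y_n]\|$ after taking adjoints. Hence the quantity is $<2\epsilon$ on $S\in\omega$; letting $\epsilon\downarrow 0$ shows $[x,y]\in Ann(A,A_\omega)$, and as $x,y$ were arbitrary, $F(A,\omega)$ is abelian.

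For (1)$\Rightarrow$(2) I would argue by contraposition. Suppose (2) fails, and fix a finite $F\subset A_1$ and $\epsilon>0$ for which no $G,\delta$ work. Using separability of $A$, fix $(a_k)$ dense in $A_1$, and for each $n$ feed $G=\{a_1,\dots,a_n\}$ and $\delta=1/n$ into the failure of (2) to obtain $x_n,y_n\in A_1$ with $\max_{k\le n}\|[a_k,x_n]\|<1/n$, $\max_{k\le n}\|[a_k,y_n]\|<1/n$, and $\max_{f\in F}\|[x_n,y_n]f\|\ge\epsilon$. The point to emphasize is that this last obstruction holds for \emph{every} $n$, hence on $\mathbb N\in\omega$. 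Density of $(a_k)$ together with $1/n\to 0$ makes $(x_n),(y_n)$ central, so they define $x,y\in A'\cap A_\omega$; if $F(A,\omega)$ were abelian then $[x,y]=\pi_\omega(([x_n,y_n]))\in Ann(A,A_\omega)$ would force $\|[x_n,y_n]f\|\to_\omega 0$ for each of the finitely many $f\in F$, whence $\{n:\max_{f\in F}\|[x_n,y_n]f\|<\epsilon\}\in\omega$ -- an empty set, contradicting properness of $\omega$. Therefore $F(A,\omega)$ is not abelian, which is the contrapositive.

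The main obstacle is exactly the subtlety just flagged: for a general filter (in particular a free ultrafilter) one cannot manufacture the witnessing central sequences by a diagonal or subsequence argument, since $\omega$ need not retain that subsequence. The remedy is to choose the data $G_n=\{a_1,\dots,a_n\}$, $\delta_n=1/n$ so that the single scalar obstruction $\max_{f\in F}\|[x_n,y_n]f\|\ge\epsilon$ is forced for \emph{all} $n$ at once; this is what makes the construction uniform over every admissible $\omega$ and is the same feature that yields the filter-independence in the ``Consequently'' clause. The remaining points -- normalizing ultraproduct representatives into $A_1$, and passing between $\|[x_n,y_n]a\|$ and $\|a[x_n,y_n]\|$ using that central sequences are $\ast$-closed -- are routine and I would not dwell on them.
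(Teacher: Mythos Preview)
Your proof is correct and follows essentially the same route as the paper's: contraposition for (1)$\Rightarrow$(2) via a dense sequence $(a_k)$ in $A_1$ with $G_n=\{a_1,\dots,a_n\}$, $\delta_n=1/n$, and a direct filter-intersection argument for (2)$\Rightarrow$(1). Your treatment is in fact slightly more careful than the paper's in two places --- you handle the two-sided annihilator condition explicitly via the $*$-closure of central sequences, and you use $\max_{f\in F}$ to avoid a pigeonhole/subsequence step --- but these are cosmetic refinements of the same argument, not a different approach.
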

 \begin{proof}
 (1)$\Rightarrow$(2): By contradiction. Assume otherwise, i.e. assume there exists a finite set $F\subset A_1$ and $\epsilon >0$  such that for all $n\in \mathbb N$ there are $x_n, y_n \in A_1$ with
 \begin{equation}\label{ImpliesConvergence} \max_{a\in G_n}\|[a, x_n]\|< 1/n \;\text{ and}\; \max_{a\in G_n}\|[a, y_n]\|< 1/n\end{equation} (where
 $G_1\subset G_2\subset \ldots$ is an increasing sequence of finite subsets of $A_1$ with dense union) but such that
 $\|[x_n, y_n]f\|\ge \epsilon$ for some $f\in F$ and all $n$. From (\ref{ImpliesConvergence}) we find that $[x_n, a]\to 0$ and $[y_n, a]\to 0$ for any $a\in A$. Since $\omega$ contains the filter of all cofinite sets, it implies that $[x_n, a]\to_{\omega} 0$ and $[y_n, a]\to_{\omega} 0$.
 Then writing $x= (x_1, x_2, \ldots)$ and $y= (y_1, y_2, \ldots)$, we find $x, y\in A_{\infty}$ with $\pi_{\omega}(x), \pi_{\omega}(y)\in A_{\omega}\bigcap A'$, but
 $$\|[\pi_{\omega}(x), \pi_{\omega}(y)]f\|=\lim_{n\to \omega}\|[x_n, y_n]f\|\ge \epsilon,$$
 i.e. $[\pi_{\omega}(x), \pi_{\omega}(y)]\notin Ann(A, A_{\omega})$ and hence the images of $x$ and $y$ in $F(A, \omega)$ do not commute.

 (2)$\Rightarrow$(1): Let $x= (x_1, x_2, \ldots)$ and $y= (y_1, y_2, \ldots)$ be two contractions in $A_{\infty}$ such that $\pi_{\omega}(x), \pi_{\omega}(y)\in A_{\omega}\bigcap A'.$ Fix $f\in A$. Let $\epsilon >0$ and choose $G$ and $\delta$ with respect to $F= \{f\}$ and $\epsilon$.  For $a\in A$, let $E_a = \{n\in \mathbb N\;|\; \|[a, x_n]\|< \delta\}$ and $E'_a = \{n\in \mathbb N\;|\; \|[a, y_n]\|< \delta\}$. Since  $[a, x_n]\to_{\omega} 0$ and $[a, y_n]\to_{\omega} 0$, we have  $E_a, E'_a\in {\omega}$. Let $E = \bigcap_{a\in G} (E_a\bigcap E'_a)$. Then
 $\{n\in \mathbb N\;|\; \|[x_n, y_n]f\|< \epsilon\} \supseteq E\in \omega.$ Therefore $\|[x_n, y_n]f\|\to_{\omega}0.$ Since it holds for any $f\in A$, we conclude that $[\pi_{\omega}(x), \pi_{\omega}(y)]\in Ann(A, A_{\omega}).$ Thus the images of $x$ and $y$ in $F(A, \omega)$ commute.

 \end{proof}

 From now we will write $F(A)$ instead of $F(A, \omega)$.

\subsection{Nilpotents in C*-algebras}

An element $x\in A$ is {\it nilpotent of order $n$}  if $x^n=0$. It was discovered in \cite{OlsenPedersen} that a nilpotent element in any quotient C*-algebra can be lifted to a nilpotent of the same order. In \cite{NilpotentContraction} it was proved that this can even be done in a norm preserving way.

\begin{theorem}\label{LiftabilityOfNilpotents} (Liftability of nilpotent contractions) Let $I$ be an ideal in a C*-algebra $A$ and let $\pi:A\to A/I$ be the canonical surjection. Let $x\in A/I$ be such that $x^n=0$ and $\|x\|\le 1$. Then there exists $a\in A$ such that $\pi(a) =x, a^n=0,\;\text{and}\; \|a\|\le 1.$
\end{theorem}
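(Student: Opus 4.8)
By the Olsen--Pedersen result quoted above, $x$ lifts to \emph{some} $b\in A$ with $\pi(b)=x$ and $b^n=0$; the only thing missing is a norm bound, since a priori $\|b\|$ may exceed $1$. So the whole statement reduces to the following internal norm-reduction assertion: \textbf{if $b\in A$ satisfies $b^n=0$ and $\|\pi(b)\|\le 1$, then $\pi(b)$ lifts to a nilpotent contraction of order $n$}; equivalently, the universal C*-algebra $E_n:=C^*\langle x\mid x^n=0,\ x^*x\le 1\rangle$ is projective. My plan is to settle $n=2$ by hand and then push to general $n$ by induction on the nilpotency order.

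For $n=2$ the argument is short. With $g(t):=\min\{1,t^{-1/2}\}$, a continuous function on $[0,\infty)$, put $a:=b\,g(b^*b)$, where $g(b^*b)$ is formed in $\widetilde A$; as $g-1$ vanishes at $0$ we have $a=b+b\,(g-1)(b^*b)\in A$. The commutation identity $b\,f(b^*b)=f(bb^*)\,b$ (valid for every continuous $f$) gives $a^2=g(bb^*)\,b^2\,g(b^*b)=0$; moreover $\|a\|^2=\|g(b^*b)\,b^*b\,g(b^*b)\|=\sup_{t}\,t\,g(t)^2=\sup_t\min\{t,1\}\le 1$; and since $g\equiv 1$ on $\sigma(x^*x)\subseteq[0,1]$ we get $\pi(a)=x\cdot 1=x$. (This reflects the isomorphism of $E_2$ with the cone $C_0((0,1],M_2)$ over $M_2$, which is projective.)

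For $n\ge 3$ the same cut-down $b\,g(b^*b)$ no longer annihilates the $n$-th power --- continuous functional calculus in $b^*b$ cannot be commuted past more than one factor of $b$, i.e.\ it does not respect the filtration attached to the powers of $b$ --- and overcoming this is \textbf{the main obstacle}. I would handle it with the extension
\[
0\longrightarrow J_n\longrightarrow E_n\longrightarrow E_{n-1}\longrightarrow 0,
\]
in which the quotient map is division by the closed ideal of $E_n$ generated by $x^{n-1}$ (the locus where the Jordan size is maximal) and $J_n$ is a cone-type ideal with fibre $M_n$; both $J_n$, directly, and $E_{n-1}$, by the inductive hypothesis, are projective, so the task becomes to propagate projectivity along this extension. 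Unwound, this amounts to: given $x$ with $x^n=0$, $\|x\|\le 1$, pass to the ideal $J\triangleleft A$ with $I\subseteq J$ and $J/I$ the ideal of $A/I$ generated by $x^{n-1}$; lift the image of $x$ in $A/J$ --- a nilpotent contraction of order $n-1$ --- to some $\widehat x\in A$ with $\widehat x^{\,n-1}=0$, $\|\widehat x\|\le1$, using the inductive hypothesis; then correct $\widehat x$ by an element of $J$ carrying the defect $x-\pi(\widehat x)$ so as to restore, simultaneously, exactness of the lift modulo $I$, nilpotency of order $n$, and the bound $\|\cdot\|\le1$. This last gluing --- making the correction respect all three constraints at once --- is where the real work lies; it again rests on the commutation identities relating functional calculus in $b^*b$ and in $bb^*$, together with stability properties of cones (equivalently, the good behaviour of the relations $x^n=0$, $x^*x\le1$ under small perturbations).
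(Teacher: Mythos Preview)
The paper does not prove this theorem; it is stated with citations, attributing the lifting of nilpotents to Olsen--Pedersen \cite{OlsenPedersen} and the norm-preserving refinement to \cite{NilpotentContraction}. So there is no in-paper argument to compare against beyond those references.

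On the merits of your proposal: the $n=2$ case is correct and neatly done. For $n\ge 3$, your inductive plan via the extension $0\to J_n\to E_n\to E_{n-1}\to 0$ is a reasonable structural picture, and is in fact close in spirit to the route taken in \cite{NilpotentContraction}. But as you yourself concede, the decisive step---correcting the order-$(n-1)$ lift $\widehat x$ by an element of $J$ so as to recover \emph{simultaneously} the exact lift modulo $I$, nilpotency of order $n$, and the contraction bound---is not carried out. This is not a routine bookkeeping matter: projectivity of ideal and quotient does not in general propagate to the extension, and the naive correction $\widehat x + (\text{lift of }x-\pi(\widehat x))$ destroys both nilpotency and the norm bound, with no obvious functional-calculus fix (precisely the obstacle you identified when explaining why the $n=2$ cut-down fails for higher $n$). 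The published proof in \cite{NilpotentContraction} supplies exactly this missing construction; your outline stops short of it. As written, what you have is a correct base case and a plausible plan, with the main lemma still to be proved.
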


An important consequence is the stability of nilpotents under small perturbations.

\begin{corollary}\label{StabilityOfNilpotents} (Stability of nilpotent contractions) Given $n\in \mathbb N$ and $\epsilon >0$, there exists $\delta$ such that the following holds: for any C*-algebra $A$ and any $x\in A$ satisfying $\|x^n\|\le \delta$ and $\|x\|\le 1$ there is $y\in A$ such that
$y^n=0$, $\|y\|\le 1$ and $\|y-x\|\le \epsilon.$
\end{corollary}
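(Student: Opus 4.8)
The plan is to deduce the Corollary from Theorem~\ref{LiftabilityOfNilpotents} by a routine compactness argument over $\ell^\infty$-products of C*-algebras. First I would argue by contradiction: if the statement failed for some fixed $n\in\mathbb N$ and $\epsilon>0$, then for every $k\in\mathbb N$ there would be a C*-algebra $A_k$ and a contraction $x_k\in A_k$ with $\|x_k^n\|\le 1/k$, yet with $\|x_k-y\|>\epsilon$ for every $y\in A_k$ satisfying $y^n=0$ and $\|y\|\le 1$. The point is to package all the $x_k$ into a single nilpotent element of a quotient C*-algebra and then invoke liftability.

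Concretely, I would let $B$ be the C*-algebra of bounded sequences $(b_k)_k$ with $b_k\in A_k$, equipped with coordinatewise operations and the supremum norm, and let $J\subset B$ be the (closed, two-sided) ideal of sequences with $\|b_k\|\to 0$, with quotient map $\pi\colon B\to B/J$. Setting $x=(x_k)_k\in B$, this is a contraction and, since $\|x_k^n\|\to 0$, we have $x^n\in J$, so $\pi(x)^n=\pi(x^n)=0$ while $\|\pi(x)\|\le 1$. Now Theorem~\ref{LiftabilityOfNilpotents}, applied to the ideal $J\subset B$, yields $a=(a_k)_k\in B$ with $\pi(a)=\pi(x)$, $a^n=0$, and $\|a\|\le 1$. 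Reading these off coordinatewise — which is where one must note that the norm on $B$ is the supremum norm and that nilpotency in $B$ holds iff it holds in each coordinate — gives $a_k^n=0$ and $\|a_k\|\le 1$ for every $k$, while $\pi(a)=\pi(x)$ forces $\|a_k-x_k\|\to 0$. For $k$ large enough this contradicts the choice of $x_k$, and the desired $\delta$ therefore exists.

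I do not anticipate a genuine obstacle here: the only thing needing care is the elementary bookkeeping in the product algebra (the supremum norm, and the coordinatewise nature of the relations $a^n=0$ and $\|a\|\le 1$), so that the conclusions of the lifting theorem pass to the individual $A_k$. Accordingly I expect the actual write-up to be just a few lines.
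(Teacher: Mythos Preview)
Your argument is correct and is precisely the standard way to deduce stability from liftability; the paper itself does not write out a proof, merely recording the statement as an immediate corollary of Theorem~\ref{LiftabilityOfNilpotents}. There is nothing to add.
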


Recall that a C*-algebra is {\it $n$-subhomogeneous} if the dimension of each its irreducible representations is not larger than $n$.
The following characterization of $n$-subhomogenity is obtained in \cite{HadwinCharacterizationSubhomogeneous}.

\medskip

\begin{theorem}\label{Fact} A C*-algebra $A$ is $n$-subhomogeneous if and only if each nilpotent element in $A$ has order not larger than $n$.
\end{theorem}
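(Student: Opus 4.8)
The plan is to prove the two implications separately. The forward one is routine: if $A$ is $n$-subhomogeneous and $x\in A$ is nilpotent, say $x^m=0$, then for every irreducible representation $\pi\colon A\to B(H_\pi)$ the element $\pi(x)$ is a nilpotent in $M_{\dim H_\pi}$; since $\dim H_\pi\le n$ and a nilpotent $d\times d$ matrix $T$ satisfies $T^d=0$, we get $\pi(x)^n=0$. As the direct sum of all irreducible representations of $A$ is faithful, this forces $x^n=0$, so $x$ has order at most $n$.

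For the converse I would argue by contraposition: assuming $A$ is not $n$-subhomogeneous, I produce a nilpotent element of order $n+1$. The idea is to locate inside $A$ a subquotient $B_0/I_0$ (with $I_0\trianglelefteq B_0\subseteq A$) that visibly contains a nilpotent contraction of order $n+1$ --- always the standard Jordan element $s=\sum_{i=1}^{n}e_{i,i+1}\in M_{n+1}$, a contraction with $s^{n+1}=0$ and $s^n\ne 0$ --- and then to transport it up to $A$ via liftability of nilpotent contractions (Theorem~\ref{LiftabilityOfNilpotents}) applied to the quotient map $B_0\to B_0/I_0$. There are two cases. If $A$ is type I, I choose an irreducible representation $\pi\colon A\to B(H)$ with $\dim H\ge n+1$ (available since $A$ is not $n$-subhomogeneous); then $\pi(A)\supseteq K(H)$ contains $s$ sitting in a rank-$(n+1)$ corner $pK(H)p\cong M_{n+1}$, so I take $B_0=A$, $I_0=\ker\pi$, and $B_0/I_0\cong\pi(A)\ni s$. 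If $A$ is not type I, then by Glimm's theorem $A$ contains a C*-subalgebra $C$ with an ideal $I$ such that $C/I$ is isomorphic to the CAR algebra, which contains a copy of $M_{n+1}$ and hence $s$; here I take $B_0=C$, $I_0=I$. In both cases Theorem~\ref{LiftabilityOfNilpotents} yields a contraction $x\in B_0\subseteq A$ with $x^{n+1}=0$ whose image in $B_0/I_0$ is $s$, whence $x^n\ne 0$; so $x$ is nilpotent of order exactly $n+1>n$, as required.

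The main obstacle is this converse, and within it the non-type-I case: there is no direct access to matrix units or to the compact operators inside $A$, and extracting the subalgebra $C$ with a CAR-algebra quotient is precisely the content of Glimm's theorem. The type I case, by contrast, is easy once one recalls that irreducible representations of a type I C*-algebra contain all compact operators. A more self-contained alternative would be to use Kadison's transitivity theorem to build $x\in A$ with $\|x\|\le 1$, $\|x^n\|$ bounded below and $\|x^{n+1}\|$ arbitrarily small, and then perturb $x$ to a genuine nilpotent contraction of order $n+1$ using stability of nilpotents (Corollary~\ref{StabilityOfNilpotents}); the snag there is that transitivity only prescribes $x$ on a finite-dimensional subspace, so controlling $\|x^{n+1}\|$ on the complement again calls for a Glimm-type argument, and I do not see how to avoid it.
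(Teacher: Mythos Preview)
Your proof is correct and follows essentially the same route as the paper's: the forward implication via irreducible representations, and the converse by contraposition, splitting into the type I case (where $\pi(A)\supseteq K(H)$ supplies the Jordan nilpotent) and the non-type-I case (where Glimm's theorem gives the CAR algebra as a subquotient), with Theorem~\ref{LiftabilityOfNilpotents} used to pull the nilpotent back into $A$ in both cases. Your additional remarks on the alternative transitivity-based approach are accurate but not needed for the argument.
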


For reader's convenience we will give here an alternative proof of it.

\medskip

{\it Proof}: Suppose $A$ is not $n$-subhomogeneous.
If $A$ is type I, there exists an irreducible representation $\rho$ on a Hilbert space  $H$ of dimension larger than $n$ (possibly infinite) such that $\rho(A)\supseteq K(H)$. Since there exists an operator $T\in K(H)$ such that $T^{n+1}=0$ but $T^n\neq 0$ we can apply Theorem \ref{LiftabilityOfNilpotents} and lift $T$ to a nilpotent element of order $n+1$ in $A$.
If $A$ is not type I, we find the CAR-algebra as a subquotient of $A$ by Glimm's theorem (\cite{Glimm}) and an element $T$ therein with $T^{n+1}=0$ but $T^n\neq 0$.
Again, we can lift $T$ to a nilpotent element of order $n+1$ in $A$ just as before.

The opposite implication is clear: if $A$ is n-subhomogeneous and $a\in A$ is a nilpotent, then under each irreducible representation $\rho(a)^n=0$, which implies $a^n=0$. \qed

\begin{corollary}
A C*-algebra is commutative if and only if it does not contain any non-trivial nilpotent elements.
\end{corollary}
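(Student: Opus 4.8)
The plan is to deduce this corollary as the case $n=1$ of Theorem \ref{Fact}, so that the only real work is to match the two conditions appearing there with ``commutative'' and ``contains no non-trivial nilpotents''.

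First I would record the elementary equivalence that a C*-algebra $A$ is commutative if and only if it is $1$-subhomogeneous. For the forward direction, if $A$ is abelian then by Gelfand's theorem $A \cong C_0(X)$ and every irreducible representation is a character, hence one-dimensional. For the converse, suppose every irreducible representation of $A$ is one-dimensional. Then for $a, b \in A$ and any irreducible representation $\rho$, the operators $\rho(a)$ and $\rho(b)$ are scalars, so $\rho([a,b]) = [\rho(a),\rho(b)] = 0$; since the irreducible representations of a C*-algebra separate its points, $[a,b] = 0$, i.e. $A$ is commutative.

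Next I would apply Theorem \ref{Fact} with $n = 1$: $A$ is $1$-subhomogeneous if and only if every nilpotent element of $A$ has order at most $1$. But an element $x$ with $x^1 = 0$ is simply $x = 0$, so the condition ``every nilpotent element of $A$ has order $\le 1$'' says precisely that $A$ contains no nonzero, i.e. no non-trivial, nilpotent element. Chaining the two equivalences yields the corollary.

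I do not expect any genuine obstacle here: all of the substance is already contained in Theorem \ref{Fact}, whose proof handles the type I case (lifting a Jordan-type nilpotent operator through a high-dimensional irreducible representation via Theorem \ref{LiftabilityOfNilpotents}) and the non-type-I case (passing to the CAR algebra as a subquotient via Glimm's theorem). The only point requiring a little care is to phrase the $n=1$ specialization cleanly, observing that ``order at most $1$'' degenerates to ``equal to zero'', and to note that one-dimensionality of all irreducible representations is exactly commutativity.
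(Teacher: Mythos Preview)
Your proposal is correct and matches the paper's intent: the corollary is stated without proof precisely because it is the $n=1$ case of Theorem~\ref{Fact}, with the trivial identifications ``$1$-subhomogeneous $=$ commutative'' and ``order $\le 1$ nilpotent $=$ zero'' that you spell out.
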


\bigskip

For a Hilbert space $H$, let $B(H)$ denote the C*-algebra of all bounded operators on $H$ and $K(H)$ the ideal of all compact operators.
Recall that a C*-algebra $A$ is {\it CCR} if for any  its irreducible representation $\pi$ on a Hilbert space $H$,
$$\pi(A) = K(H).$$ If for any   irreducible representation $\pi$ of $A$ on a Hilbert space $H$,
$$\pi(A) \supseteq K(H),$$  then $A$ is  called {\it GCR} or {\it type I}.
For more detailed information on CCR and type I C*-algebras the reader is referred to [\cite{Blackadar}, section IV.1].

In \cite{ContinuitySpectralRadius} it was proved that the class of type I C*-algebras is completely characterized by the behavior of nilpotent elements.
\begin{theorem}
Let $A$ be a C*-algebra. The following are equivalent:

(i) $A$ is type I;

(ii) The spectral radius function $a\mapsto \rho(a)$ is continuous on $A$;

(iii) The closure of nilpotents in $A$ consists of quasinilpotents.

\end{theorem}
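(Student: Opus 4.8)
The plan is to prove the cycle of implications (i)$\Rightarrow$(ii)$\Rightarrow$(iii)$\Rightarrow$(i). Of these, (ii)$\Rightarrow$(iii) is immediate: a nilpotent has spectral radius $0$, so if the spectral radius is norm-continuous and $a=\lim a_k$ with each $a_k$ nilpotent, then $\rho(a)=\lim_k\rho(a_k)=0$, i.e. $a$ is quasinilpotent. The two substantial implications are (i)$\Rightarrow$(ii), which I would extract from the structure theory of type I C*-algebras, and (iii)$\Rightarrow$(i), which I would prove contrapositively using Glimm's theorem together with liftability and stability of nilpotents.

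For (i)$\Rightarrow$(ii), recall that $\rho$ is automatically upper semicontinuous on any Banach algebra, being the pointwise infimum $\rho(a)=\inf_n\|a^n\|^{1/n}$ of norm-continuous functions; the work is lower semicontinuity when $A$ is type I. I would fix a composition series $(I_\alpha)$ of $A$ whose subquotients $I_{\alpha+1}/I_\alpha$ have continuous trace, express the spectrum of a given $a\in A$ through the spectra of its images in these subquotients, and thereby reduce to proving continuity of $\rho$ on continuous-trace algebras. Such an algebra is locally of the form $C_0(X)\otimes\mathcal K(H)$, so an element acts on each fibre as a compact operator; since the spectrum of a compact operator is $\{0\}$ together with isolated eigenvalues of finite multiplicity, classical perturbation theory (Newburgh-type stability of isolated finite-multiplicity spectral points) shows $\rho$ is continuous at every compact operator of nonzero spectral radius, and at the quasinilpotent ones it is continuous by upper semicontinuity. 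Propagating this fibrewise continuity across the local triviality and then up the composition series gives continuity of $\rho$ on $A$.

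For (iii)$\Rightarrow$(i), suppose $A$ is not type I; I want a norm-convergent sequence of nilpotents whose limit is not quasinilpotent. By Glimm's theorem (\cite{Glimm}), $A$ has a subquotient $B/J$ isomorphic to the CAR algebra $D$. In $D$, presented as an infinite matricial tensor product, I would build such a sequence out of shift-type elements spread over growing numbers of tensor factors --- for instance suitably normalized partial sums $c_n\sum_{j\le n}v_j$, where $v_j$ is a fixed order-$2$ nilpotent in the $j$-th factor: each such element is nilpotent of order $n+1$, while the spectral radii of the partial sums stay bounded away from $0$, so that, with the normalization arranged so the sequence converges, the limit is a non-quasinilpotent norm limit of nilpotents. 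It then remains to transfer this configuration from $D$ up to $A$. This is not automatic --- norm limits of nilpotents need not lift through a quotient map --- and it is here that Theorem \ref{LiftabilityOfNilpotents} and Corollary \ref{StabilityOfNilpotents} are used in earnest: nilpotents lift with the same order and norm, and a sufficiently careful choice of lifts (or, alternatively, running the construction directly inside $A$ via the approximate systems of matrix units furnished by Glimm's proof) produces a genuine norm-convergent sequence of nilpotents in $A$; spectral permanence for C*-subalgebras, together with $\sigma_{B/J}(\bar b)\subseteq\sigma_B(b)$, then ensures that its limit is still non-quasinilpotent, contradicting (iii) for $A$.

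The main obstacle is precisely this construction inside the CAR algebra together with the transfer to $A$. Any witness to the failure of (iii) must involve nilpotents of \emph{unbounded} order --- a norm limit of nilpotents of bounded order is itself a nilpotent, hence quasinilpotent --- yet lengthening a shift-type nilpotent is not a small perturbation in norm, so forcing the approximating sequence to converge requires a careful inductive construction balancing, at each stage, an increase of the nilpotency order against a summable correction to the previous stage. Handling this tension, and then shepherding the whole configuration through the quotient of Glimm's theorem without losing either nilpotency or the lower bound on the spectral radius of the limit, is the delicate heart of the argument; the fibrewise-to-global continuity step in (i)$\Rightarrow$(ii), when the fibres are infinite-dimensional, is a secondary technical point.
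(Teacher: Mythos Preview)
The paper does not actually prove this theorem: it is quoted from \cite{ContinuitySpectralRadius}, with the parenthetical remark that (i)$\Rightarrow$(ii) had been established in \cite{ShT}. There is therefore no in-paper proof to compare your proposal against; the theorem appears here only as background.

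That said, your sketch of (iii)$\Rightarrow$(i) has a genuine gap. The elements $v_j$ you propose --- a fixed order-$2$ nilpotent placed in the $j$-th tensor slot of the CAR algebra --- pairwise \emph{commute}. The commutative Banach subalgebra $B$ they generate therefore has each $v_j$ in its Jacobson radical; since the radical is closed, any norm-convergent sum $a=\sum_j c_j v_j$ lies in $\mathrm{rad}(B)$, whence $\rho_A(a)=\rho_B(a)=0$. Thus your proposed limit is quasinilpotent and does not witness the failure of (iii). (Your phrase ``the spectral radii of the partial sums stay bounded away from $0$'' is in any case literally false, since each partial sum is nilpotent.) A construction that works must use genuinely noncommuting approximants --- e.g.\ nilpotent perturbations of a fixed unitary inside the nested matrix subalgebras $M_{2^n}\subset M_{2^\infty}$ --- and arranging simultaneous norm-convergence together with a nonzero limiting spectral radius is precisely the delicate balance you correctly flag as the heart of the argument; your candidate, however, does not achieve it.

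Your outline of (i)$\Rightarrow$(ii) is also underspecified: an element $a\in A$ has no image in the subquotients $I_{\alpha+1}/I_\alpha$ of a composition series (it need not lie in $I_{\alpha+1}$), so ``expressing $\sigma(a)$ through the spectra of its images in these subquotients'' does not parse as stated, and the inductive passage from fibrewise continuity to global continuity across a possibly transfinite composition series is not addressed.
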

\noindent (The implication (i)$\Rightarrow$ (ii) was proved in \cite{ShT}).

Thus in all non-type I algebras there exist non-quasinilpotent limits of nilpotents.
In some non-type I C*-algebras one can even find non-zero normal limits of nilpotents, e.g. in $B(H)$ (\cite{VoiculescuNormal}), in all infinite-dimensional UHF-algebras and in all unital, simple, purely infinite algebras (\cite{Skoufranis}).
However, such normal limits do not exist in arbitrary non-type I C*-algebras, as the following observation from \cite{ContinuitySpectralRadius} shows.

\begin{proposition}\label{NotResiduallyTypeI}
If the closure of nilpotents in a C*-algebra $A$ contains a nonzero normal element, then A is not residually type I.
\end{proposition}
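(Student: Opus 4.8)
\emph{Plan of proof.} I would argue by contradiction, reducing everything to the characterization of type I algebras recalled just above --- specifically, to the implication that in a type I C*-algebra the norm-closure of the set of nilpotents consists of quasinilpotent elements --- together with the trivial fact that a normal quasinilpotent element of a C*-algebra is $0$ (since $\|x\|=\rho(x)$ for normal $x$).

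Recall that $A$ is \emph{residually type I} if there is a family $\{I_\alpha\}$ of closed two-sided ideals of $A$ with $\bigcap_\alpha I_\alpha=\{0\}$ and $A/I_\alpha$ type I for every $\alpha$ (equivalently, $A$ embeds into a product of type I C*-algebras). So suppose $A$ were residually type I, and let $b\in A$ be a non-zero normal element lying in the norm-closure of the nilpotents of $A$; write $b=\lim_{k}x_k$ with $x_k^{m_k}=0$ for suitable $m_k\in\mathbb N$. Since $\bigcap_\alpha I_\alpha=\{0\}$ and $b\neq 0$, one can pick an index $\alpha$ with $b\notin I_\alpha$; let $\pi\colon A\to A/I_\alpha$ be the quotient map, so that $\pi(b)\neq 0$.

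Next I would transport the picture through $\pi$. Each $\pi(x_k)$ satisfies $\pi(x_k)^{m_k}=0$, so it is nilpotent, and $\pi(x_k)\to\pi(b)$; hence $\pi(b)$ lies in the norm-closure of the nilpotents of the type I algebra $A/I_\alpha$, and is therefore quasinilpotent by the cited theorem, i.e. $\rho(\pi(b))=0$. But $\pi(b)$ is normal, being the image of a normal element under a $*$-homomorphism, so $\|\pi(b)\|=\rho(\pi(b))=0$, contradicting $\pi(b)\neq 0$. This yields the claim.

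I do not anticipate a real obstacle: once the definition of ``residually type I'' is unwound, the whole argument is a one-step reduction to the theorem recalled above, using only that $*$-homomorphisms preserve nilpotency and normality and that norm equals spectral radius on normal elements. The one point worth flagging is the tacit assumption that the normal element be non-zero --- without it the statement is vacuous, since $0$ is a normal nilpotent in every C*-algebra.
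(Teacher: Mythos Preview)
Your argument is correct. Note that the paper does not actually prove this proposition --- it is quoted from \cite{ContinuitySpectralRadius} --- so there is nothing to compare against; but your proof is precisely the natural one-line reduction to the preceding theorem (type I $\Rightarrow$ norm-closure of nilpotents consists of quasinilpotents), and your remark that the normal element must be tacitly assumed non-zero is exactly right.
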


Here a residually type I C*-algebra denotes a C*-algebra that has a separating family of $\ast$-homomorphisms into type I C*-algebras.
The class of residually type I C*-algebras is known to being strictly larger than the class of type I C*-algebras  as, for instance, it contains all residually finite-dimensional (RFD) C*-algebras which include numerous non-type I examples.

\section{Extensions by compact operators}

In this section we consider  extensions of non-zero C*-algebras by compact operators or, in other words,  C*-algebras of operators strictly containing all compact operators.
In the unital case Phillips proved that for such C*-algebras the corresponding central sequence algebras are not abelian (\cite{Phillips}).
Here we obtain a far reaching generalization of this result stating that they are not even residually type I. Moreover, our result covers the non-unital case as well.
Our proof is also more direct in the sense that it doesn't use Connes technique for automorphism groups or the connection between automorphism groups and central sequences in general, which was the main tool in \cite{Phillips}.

The results of this section will allow us to pass from type I to CCR C*-algebras later when we will characterize  subhomogeneity of central sequence algebras.

\medskip

Let $H$ be a separable, infinite-dimensional Hilbert space and denote by $\pi\colon B(H) \to B(H)/K(H)$ the canonical surjection.

\begin{lemma}\label{commutant}
Let $A\subseteq B(H)$ be a separable C*-algebra, then $\pi(A)'$ contains a copy of $B(H)$.
\end{lemma}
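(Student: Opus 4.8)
The plan is to produce, inside the relative commutant $\pi(A)'\subseteq B(H)/K(H)$, an infinite family of matrix units for a copy of $B(\ell^2)$, built from operators that are \emph{approximately} reducing for the separable algebra $A$. Concretely, fix a dense sequence $(a_k)$ in $A$. First I would use separability of $H$ together with the separability of $A$ to construct an increasing sequence of finite-rank projections $P_1\le P_2\le \cdots$ with $P_m\to 1$ strongly and with $\|[P_m,a_k]\|\to 0$ as $m\to\infty$ for each fixed $k$; this is the standard quasicentral approximate unit argument (Arveson), applied to the projections rather than to positive contractions, which is legitimate because one can arrange the $P_m$ to be spectral projections of a quasicentral approximate unit up to small perturbation, or more simply because any separable $C^*$-algebra acting on $H$ has a sequence of finite-rank projections converging strongly to $1$ and asymptotically commuting with the algebra modulo $K(H)$. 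Set $Q_m = P_m - P_{m-1}$ (with $P_0=0$), a sequence of mutually orthogonal finite-rank projections summing strongly to $1$, each of which asymptotically commutes with $A$ in the sense that $\|[Q_m,a]\|$ can be made small — but more to the point, by grouping blocks we may assume $\dim Q_m H \to \infty$, or at least $\dim Q_mH\ge 1$, and that the "tail sums" $\sum_{m\ge N}Q_m$ commute with $A$ modulo compacts.

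Next, choose for each $m$ a rank-one projection $e_m \le Q_m$ and a partial isometry structure linking the $Q_m$'s. The key point: pick partial isometries $v_m\colon e_1H \to e_mH$ (rank one), and more generally rank-one partial isometries $v_{ij}$ with $v_{ij}v_{ij}^* = e_i$, $v_{ij}^*v_{ij}=e_j$, forming a system of matrix units $\{v_{ij}\}_{i,j\ge 1}$ for $K(\ell^2)$ sitting blockwise along the $Q_m$'s. These $v_{ij}$ individually are compact, hence die in the Calkin algebra, so instead I would form, for each bounded operator $T=(t_{ij})\in B(\ell^2)$, the (generally non-compact) operator $\Phi(T) = \sum_{i,j} t_{ij} v_{ij}$, where the sum converges strongly. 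One checks $\Phi$ is a unital $*$-homomorphism $B(\ell^2)\to B(H)$. Composing with $\pi$ gives $\pi\circ\Phi\colon B(\ell^2)\to B(H)/K(H)$, and this is still injective: if $\Phi(T)$ were compact then, since $v_{ii}^*\Phi(T)v_{jj} = t_{ij}v_{ij}$ up to rank-one corrections and these are genuine corners, one can recover each entry $t_{ij}$ as a norm-limit of compressions of $\Phi(T)$ to finite-rank spaces, forcing $T=0$ — so $\pi\circ\Phi$ is an embedding of $B(\ell^2)\cong B(H)$.

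The heart of the argument — and the step I expect to be the main obstacle — is arranging the blocks $Q_m$ and the matrix units $v_{ij}$ so that $\pi(\Phi(T))$ actually lands in $\pi(A)'$, i.e. $[\Phi(T),a]\in K(H)$ for every $a\in A$ and every $T\in B(\ell^2)$. It suffices to prove this for $T=$ a matrix unit $E_{ij}$, i.e. to show $[\Phi(E_{ij}),a]$ is compact; but $\Phi(E_{ij})$ is only built from finitely many rank-one blocks plus a strongly-convergent tail, so one reduces to controlling commutators of the "shift along the diagonal blocks" with $a$. This is where the asymptotic commutation $\|[Q_m,a_k]\|\to 0$ is used: by passing to a subsequence of blocks (a diagonal argument over the dense sequence $(a_k)$) one makes $\sum_m\|[Q_m,a_k]\|<\infty$ for each $k$, so that the off-block pieces of $[\Phi(E_{ij}),a_k]$ are summably small hence compact, and the on-block pieces are finite-rank; density of $(a_k)$ and continuity then upgrade this to all $a\in A$. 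Once $\pi\circ\Phi$ is shown to take values in $\pi(A)'$ and to be injective, it is automatically isometric (injective $*$-homomorphisms of $C^*$-algebras are isometric), so $\pi(A)'$ contains an isometric copy of $B(H)$, which is the claim.
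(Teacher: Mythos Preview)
Your approach has two genuine gaps, and both are fatal as written.

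\textbf{First gap: quasidiagonality.} The very first step --- producing finite-rank projections $P_m\nearrow 1$ with $\|[P_m,a_k]\|\to 0$ for a dense sequence $(a_k)$ in $A$ --- is precisely the statement that the concrete operator algebra $A\subseteq B(H)$ is quasidiagonal. This is \emph{not} true in general: if $A$ contains the unilateral shift (or any proper isometry), no such sequence of projections exists. A quasicentral approximate unit for $K(H)$ relative to $A$ gives you positive contractions, not projections, and the passage ``spectral projections of a quasicentral approximate unit up to small perturbation'' does not work, exactly because of obstructions like the Fredholm index. Your alternative phrasing ``asymptotically commuting with the algebra modulo $K(H)$'' is either trivially true (any finite-rank projection has finite-rank, hence compact, commutator with anything) and then useless for the summability estimate $\sum_m\|[Q_m,a_k]\|<\infty$ you need later, or it means norm-asymptotic commutation and is false.

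\textbf{Second gap: injectivity of $\pi\circ\Phi$.} Your matrix units $v_{ij}$ are rank-one operators between one-dimensional spaces $e_jH\to e_iH$. Hence $\Phi(T)$ is unitarily equivalent to $T\oplus 0$ on $\bigl(\bigoplus_i e_iH\bigr)\oplus\bigl(\bigoplus_i e_iH\bigr)^\perp$, so $\Phi(T)\in K(H)$ if and only if $T\in K(\ell^2)$. In particular $\pi(\Phi(E_{ij}))=0$ for every matrix unit, and $\ker(\pi\circ\Phi)=K(\ell^2)$. Your injectivity argument (``one can recover each entry $t_{ij}$ \ldots\ forcing $T=0$'') only shows that $\Phi$ itself is injective, not that $\pi\circ\Phi$ is: recovering the matrix entries of a compact operator does not make it zero. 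So at best you embed the Calkin algebra $B(\ell^2)/K(\ell^2)$ into $\pi(A)'$, not $B(H)$. (Relatedly, the reduction ``it suffices to prove $[\Phi(E_{ij}),a]\in K(H)$'' is vacuous for the same reason --- each $\Phi(E_{ij})$ is already compact --- and does not settle the question for general $T$, since the $E_{ij}$ do not generate $B(\ell^2)$ as a $C^*$-algebra.)

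The paper's proof avoids both problems in one stroke via Voiculescu's theorem: choosing any representation $\varrho_0$ of $A/(A\cap K(H))$ and amplifying to $\varrho=\varrho_0\otimes 1$, Voiculescu gives a unitary equivalence modulo compacts between $\iota$ and $\iota\oplus\varrho$, and then $\pi(0\oplus(1\otimes B(H)))$ sits inside $\pi(A)'$. The infinite multiplicity in $1\otimes B(H)$ is exactly what guarantees injectivity after passing to the Calkin algebra, and Voiculescu's theorem replaces the (unavailable) block-diagonal decomposition. If you want to repair your approach, both fixes push you toward the same idea: you need infinite-rank blocks and an absorption theorem rather than a quasidiagonal splitting.
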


\begin{proof}
Let $\iota\colon A\to B(H)$ denote the inclusion map.
Let $\varrho_0\colon A\to A/(A\cap K(H))\to B(H)$ be any non-degenerate representation and set $\varrho = \varrho_0^{\oplus\infty}$, i.e. $\varrho=\varrho_0 \otimes 1\colon A\to B(H\otimes H)$.
By Voiculescu's theorem (\cite{BrownOzawa}, Th. 1.7.3), $\iota$ and $\iota\oplus\varrho$ are unitarily equivalent after modding out the compacts.
In particular, $\pi(A)'$ contains a copy of $\pi((\iota\oplus\varrho)(A)')\supseteq \pi(0\oplus (1\otimes B(H)))\cong B(H)$.
\end{proof}

Next we prove a key step towards Theorem \ref{MainResult} which will  eventually serve as a reduction step from type I C*-algebras to the CCR case.
Our strategy is to obtain approximately central nilpotent elements by lifting suitable nilpotent elements from the Calkin algebra and then absorb them into the C*-algebra of interest using compact cut-downs while approximately preserving commutation relations from the Calkin algebra.
The idea that the relative commutant in the Calkin algebra might be useful in exploring central sequences is somewhat inspired by Theorem 2.7 in \cite{Phillips}.

\begin{theorem}\label{ContainK(H)}
Let $A \subset B(H)$ be a separable C*-algebra such that $A \supsetneq K(H)$. Then $F(A)$ is not residually type I.
\end{theorem}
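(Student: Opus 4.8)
The plan is to produce a central sequence in $A$ that, modulo the annihilator $\mathrm{Ann}(A, A_\omega)$, is a normal limit of nilpotents, and then invoke Proposition~\ref{NotResiduallyTypeI}. The starting point is the observation that $A \supsetneq K(H)$ forces a genuine obstruction: since $K(H) \subseteq A$, the algebra $A$ acts non-degenerately on $H$ (after throwing away a possible unital summand, which is harmless), and $\pi(A) \neq 0$ in the Calkin algebra $B(H)/K(H)$ precisely because $A$ is strictly larger than $K(H)$. By Lemma~\ref{commutant}, the relative commutant $\pi(A)'$ in the Calkin algebra contains a unital copy of $B(H)$. Inside that copy of $B(H)$ we can find, for instance, a non-zero normal element $N$ that is a norm-limit of nilpotent contractions — concretely, take the image of a unilateral-shift-like construction, or directly use the fact (Voiculescu, as cited) that $B(H)$ contains normal limits of nilpotents; in any case $\|N^{\,n}\|$ small for large $n$ along a suitable sequence of nilpotent approximants $m_n$ with $m_n$ normal in the limit.

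\medskip

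Next I would lift: each approximant $m_n \in \pi(A)' \subseteq B(H)/K(H)$ lifts (by Theorem~\ref{LiftabilityOfNilpotents}, applied to the ideal $K(H) \lhd B(H)$) to a nilpotent contraction $\tilde m_n \in B(H)$ of the same order. The element $\tilde m_n$ need not commute with $A$ on the nose, only modulo $K(H)$: $[\tilde m_n, a] \in K(H)$ for every $a \in A$. To manufacture an honest central sequence I would cut down by an approximate unit for $K(H)$: let $(e_\lambda)$ be an increasing approximate unit of $K(H)$ consisting of finite-rank projections, and form elements of the shape $(1 - e_{\lambda}) \tilde m_n (1 - e_\lambda)$ for $\lambda = \lambda(n)$ chosen large enough. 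Because $[\tilde m_n, a]$ is compact, $e_\lambda [\tilde m_n, a] \to [\tilde m_n, a]$ in norm as $\lambda \to \infty$, so for a suitable diagonal choice the commutators $[(1-e_{\lambda(n)})\tilde m_n(1-e_{\lambda(n)}), a]$ tend to $0$ in norm for each $a$ in a dense sequence — hence for all $a \in A$ by an $\varepsilon/3$-argument. This uses separability of $A$ crucially to diagonalise over a countable dense set. One must check that this cut-down procedure does not destroy (i) the nilpotency order — here I would apply Corollary~\ref{StabilityOfNilpotents}: $\|\big((1-e_{\lambda})\tilde m_n(1-e_{\lambda})\big)^{\,n}\|$ differs from $\|\tilde m_n^{\,n}\| = 0$ by a compact correction that can be made small, so one can perturb to an exact nilpotent of order $n$; and (ii) the "normal in the limit" property and non-triviality modulo the annihilator — since the $m_n$ themselves converge to the normal $N \neq 0$ in $\pi(A)'$, and the cut-downs agree with the $\tilde m_n$ modulo $K(H) \subseteq \mathrm{Ann}$-type errors after passing to $A_\omega$.

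\medskip

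Assembling this: the sequence $x = (x_n)$ with $x_n$ the perturbed cut-downs gives a central sequence, so $\pi_\omega(x) \in A' \cap A_\omega$, and its class $[x] \in F(A)$ is a norm-limit of nilpotents (each $x_n$ is, up to small perturbation, nilpotent of order $n$, and these orders $\to\infty$; more precisely one produces for each fixed $k$ a central-sequence nilpotent of order $k$ whose class is non-zero, exhibiting that nilpotents of unbounded order sit in $F(A)$, while simultaneously the distinguished sequence converges to a normal element). The key point is that the limiting normal element is \emph{not} in $\mathrm{Ann}(A, A_\omega)$: this is where the strict containment $A \supsetneq K(H)$ is used again, guaranteeing $\pi(A) \neq 0$ so that the copy of $B(H)$ in $\pi(A)'$ genuinely survives, and where one checks that the normal limit $N$ can be chosen with $\|Na\| + \|aN\|$ not small for some $a \in A$ (e.g. $a \in K(H)$ suitably, or using that $N$ commutes with but is not annihilated by $\pi(A)$). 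Then $[x]$ is a non-zero normal element in the closure of nilpotents of $F(A)$, and Proposition~\ref{NotResiduallyTypeI} yields that $F(A)$ is not residually type~I.

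\medskip

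\textbf{Main obstacle.} The delicate point is the interplay between the cut-down by $(1-e_\lambda)$ — which is what converts "central modulo $K(H)$" into "genuinely central" — and the preservation of both the nilpotency structure and the non-triviality in $F(A)$. Cutting down shrinks the element and introduces compact errors on all sides; one must route these errors so that (a) commutators with $A$ genuinely vanish (needs the compactness of $[\tilde m_n,a]$ plus a diagonal argument over a dense set, hence separability), (b) the perturbed elements are exactly nilpotent of the prescribed order (needs Corollary~\ref{StabilityOfNilpotents}, so the compact correction to $x_n^{\,n}$ must be controlled in norm, not just known to be compact), and (c) the resulting class in $F(A)$ is non-zero and normal-in-the-limit (needs that the errors land in $\mathrm{Ann}(A,A_\omega)$, which again leans on compactness and on $A$ acting non-degenerately). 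Balancing these three requirements with a single diagonal choice $\lambda(n)$ is the technical heart of the argument; everything else is bookkeeping with Voiculescu's theorem and the cited lifting/stability results.
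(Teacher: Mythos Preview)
Your overall strategy matches the paper's --- find a normal limit of nilpotents in the copy of $B(H)$ inside $\pi(A)'$, lift the nilpotents to $B(H)$, and then manufacture a central sequence --- but there is a genuine gap at the absorption step.

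The elements $(1-e_\lambda)\tilde m_n(1-e_\lambda)$ you construct lie in $B(H)$, not in $A$. Cutting down by $1-e_\lambda$ only modifies $\tilde m_n$ by a compact operator, so the result still has image $m_n \neq 0$ in the Calkin algebra; in particular it is not compact, and there is no reason for it to belong to the separable subalgebra $A$. A central sequence must consist of elements of $A$, so as written you have no element of $A_\infty$ at all. Relatedly, your remark that ``the cut-downs agree with the $\tilde m_n$ modulo $K(H) \subseteq \mathrm{Ann}$-type errors'' is not correct: a constant compact perturbation is not in $\mathrm{Ann}(A,A_\omega)$, since $K(H) \subseteq A$ acts nondegenerately.

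The paper fixes exactly this by a second absorption step. After forming $T_n^{(j)} = (1-k_n)T^{(j)}$ with a \emph{quasicentral} approximate unit $(k_n)$ (quasicentrality, not just being an approximate unit, is what makes $\|(T_n^{(j)})^{s_j}\|$ small via the commutator estimate in Lemma~\ref{trivial}(v)), one multiplies on the left by a further quasicentral approximate unit $y_n \in K(H)$ to obtain $x_n^{(j)} = y_n T_n^{(j)} \in K(H) \subseteq A$. This kills the Calkin image, so non-triviality in $F(A)$ can no longer be read off in $B(H)/K(H)$; instead one fixes $a_0 \in A\setminus K(H)$ with $\|\pi(a_0)\|=1$ at the outset, arranges $\|b_j\pi(a_0)\|=1$ in the Calkin algebra, and then uses that $y_n \to 1$ strongly to retain $\|x_n^{(j)} a_0\| \geq 1$. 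Your sketch of the non-triviality check (``$N$ commutes with but is not annihilated by $\pi(A)$'') is pointing in the right direction but cannot be carried out for elements that have been pushed into $K(H)$, where $\pi$ vanishes.

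In short: the missing idea is the second multiplication by an approximate unit of $K(H)$ to land inside $A$, together with the bookkeeping (quasicentrality for the nilpotency bound, and testing against a fixed $a_0 \in A \setminus K(H)$ for non-triviality) that this forces.
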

\begin{proof} At first we observe that the proof of Lemma \ref{commutant} in addition shows that for any $a\in A$ and any $b$ in the copy of $\pi(0\oplus (1\otimes B(H))) $ inside $\pi(A)'$ we have
$\|\pi(a)b\| = \|b\pi(a)\| = \|\pi(a)\|\|b\|.$

Now fix any $a_0\in A\backslash K(H)$ with $\|\pi(a_0)\| =1$.

Since in $B(H)$ there exists a sequence of nilpotents converging to a non-zero normal operator (\cite{VoiculescuNormal}),  by Lemma \ref{commutant} and the observation above we can find nilpotent elements $b_j\in \pi(A)'$, $j\in \mathbb N$,  and a non-zero normal element $b\in \pi(A)'$ such that $b_j\to b$, $\|b_j\| = 1$, and
$\|b_j\pi(a_0)\|=1$ for any $j\in \mathbb N$.
Since the $b_j$'s converge to a normal element, by passing to a subsequence we can further assume that $\|b_j-b_{j-1}\|< 1/2^j$ and $\|[b_j^*, b_j]\|< 1/j$. Let $s_j$ be the order of nilpotence of
$b_j$.

By Theorem \ref{LiftabilityOfNilpotents} we can lift each $b_j$ to a nilpotent contraction $T^{(j)} \in B(H)$ of the same order $s_j$. Then all $T^{(j)}$ obtained this way will commute with $A$ modulo the compacts.
We fix a dense subset $a_1, a_2, \ldots$ in the unit ball of $A$.

Now let $\{k_{\lambda}\}$ be an approximate unit in $K(H)$ which is quasicentral for the C*-algebra $C^*(\it A, T^{(1)}, T^{(2)}, \ldots)$.
We can then choose  $k_{\lambda_n}$, denoted by $k_n$ for short, such that

\begin{align}
\|k_n[T^{(j)}, a_i]- [T^{(j)}, a_i]\| & < 1/n, & i\le n, j\le n, \label{0new} \\
\|[k_n, a_i]\| & < 1/n, & i\le n, \label{0'new} \\
\|[k_n, T^{(j)}]\| & < \frac{2}{s_j(s_j-1)n}, & j\le n, \label{4new} \\
\|(1-k_n)(T^{(j)} - T^{(j-1)})\| & < 1/2^{j}, & 1<j\le n, \label{newnew2}\\
\|[((1-k_n)T^{(j)})^*, (1-k_n)T^{(j)}]\| & < 1/j, & j\le n, \label{newnew3} \\
\|(1-k_n)T^{(j)}a_0\| & \geq 1, & \text{for any}\; j,n. \label{newnew4}
\end{align}

We set $$T^{(j)}_n = (1-k_n)T^{(j)},$$
then by (\ref{0new}), (\ref{0'new}) it follows that for each $n$
\begin{equation}\label{1new}
\|[T^{(j)}_n, a_i]\| = \|[T^{(j)}, a_i] - k_n[T^{(j)}, a_i] + [a_i, k_n]T^{(j)}\| < 2/n, \; i\le n, j\le n.
\end{equation}

\noindent Now let $\{y_{\lambda}\}$ be another approximate unit in $K(H)$ which is quasicentral for the C*-algebra $C^*(A, T^{(1)}, T^{(2)}, \ldots)$.
(We can actually take the same q.a.u. $\{k_{\lambda}\}$, but we choose different notation for convenience.)
Then we can find $y_n$ such that

\begin{equation}\label{2new}
\|[y_n, a_i]\|< 1/n, \; i\le n,
\end{equation}

\begin{equation}\label{3new}
\|[T^{(j)}_n, y_n]\|< \frac{2}{s_j(s_j-1)n}, j\le n,
\end{equation}

\begin{equation}\label{**new}
\|[\left(y_nT_n^{(j)}\right)^*, y_nT_n^{(j)}] \|< \frac{1}{j},  j\le n,
\end{equation}
and, since $y_{\lambda}\to 1$ in the strong operator topology,
\begin{equation}\label{***new}
\|y_nT_n^{(j)}a_0\|\geq 1, j\leq n.
\end{equation}

Let $$x^{(j)}_n = y_nT^{(j)}_n \in K(H)\subset A.$$
Then by (\ref{1new}), (\ref{2new})
$$\|[x^{(j)}_n, a_i]\| = \|y_n[T^{(j)}_n, a_i] + [y_n, a_i]T^{(j)}_n\| < 3/n, \; i,j\le n.$$
Thus $(x^{(j)}_n)_{n\in \mathbb N}$ is a central sequence in $A$ for each $j$.
By (\ref{4new}) and Lemma \ref{trivial} (v) we verify
\begin{equation}\label{5new}
\|\left(T^{(j)}_n\right)^{s_j}\| \le \frac{(s_j-1)s_j}{2} \|[T^{(j)}, 1-k_n]\|< \frac{1}{n}, \; j\le n,
\end{equation}
and by (\ref{5new}),  (\ref{3new}) and Lemma \ref{trivial} (v)
\begin{equation}\label{newnew}
\|\left(x_n^{(j)}\right)^{s_j}\| \le \|y_n^{s_j}(T_n^{(j)})^{s_j}\| + \frac{s_j(s_j-1)}{2}\|[T_n^{(j)}, y_n]\|< \frac{2}{n}, \; j\le n.
\end{equation}

Let $x^{(j)}=(x^{(j)}_n)_{n\in \mathbb N}\in A_\infty$, for each $j\in \mathbb N$. By (\ref{newnew}), each $\pi_{\omega}( x^{(j)})$ is nilpotent.
Using (\ref{newnew2}) we find
$$\|\pi_{\omega} (x^{(j)}) - \pi_{\omega} (x^{(j-1)})\| \le  \frac{1}{2^{j-1}}.$$
which implies that the sequence $\pi_{\omega}(x^{(j)})$ is Cauchy and hence converges to $\pi_{\omega}(x)\in A_{\omega}\bigcap A'$, for some $x= (x_n)$.
By (\ref{**new}),
$$\|[\pi_{\omega}(x^{(j)})^*, \pi_{\omega}( x^{(j)})] \|\le 1/j$$ and hence $\pi_{\omega}( x)$ is normal.
By (\ref{***new}), $\|\pi_{\omega}( x^{(j)}) a_0\|\ge 1$ for every $j\in \mathbb N$ and hence $\|\pi_{\omega}( x) a_0\|\ge 1$ which implies that $\pi_{\omega}( x)\notin Ann(A, A_{\omega})$.
Therefore the images of $ x^{(j)}$ in $F(A)$, $j\in \mathbb N$, form a sequence of nilpotents converging to a non-zero normal element.
By Proposition \ref{NotResiduallyTypeI}, it follows that $F(A)$ is not residually type I.
 \end{proof}

 Recall that a C*-algebra is called {\it residually finite-dimensional} (RFD) if it has a separating family of finite-dimensional representations.

 \begin{corollary} Let $A \subset B(H)$  be a separable C*-algebra such that $A \supsetneq K(H)$.  Then $F(A)$ is not type I and not RFD.
\end{corollary}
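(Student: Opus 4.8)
The plan is to deduce this statement immediately from Theorem~\ref{ContainK(H)} together with the elementary observation, already noted in the discussion following Proposition~\ref{NotResiduallyTypeI}, that both the class of type I C*-algebras and the class of RFD C*-algebras are contained in the class of residually type I C*-algebras.

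First I would recall that a type I C*-algebra is trivially residually type I, since the identity map is a single separating $\ast$-homomorphism into a type I C*-algebra. Hence, if $F(A)$ were type I, it would be residually type I, contradicting Theorem~\ref{ContainK(H)}. Second, every finite-dimensional C*-algebra is type I (indeed CCR), so an RFD C*-algebra, having by definition a separating family of finite-dimensional — hence type I — representations, is residually type I as well; thus if $F(A)$ were RFD it would again be residually type I, once more contradicting Theorem~\ref{ContainK(H)}.

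Since $A$ is separable and $A \supsetneq K(H)$, Theorem~\ref{ContainK(H)} applies and yields that $F(A)$ is not residually type I. Combining this with the two inclusions above, $F(A)$ is neither type I nor RFD, which is the assertion. There is essentially no obstacle in this argument: all the real work is contained in Theorem~\ref{ContainK(H)}, and the corollary merely unwinds the inclusions $\{\text{type I}\} \subseteq \{\text{residually type I}\} \supseteq \{\text{RFD}\}$.
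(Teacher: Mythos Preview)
Your argument is correct and is exactly the intended one: the paper states this corollary without proof, treating it as immediate from Theorem~\ref{ContainK(H)} together with the inclusions $\{\text{type I}\}\subseteq\{\text{residually type I}\}\supseteq\{\text{RFD}\}$ noted in the paragraph following Proposition~\ref{NotResiduallyTypeI}.
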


\section{Fell's condition of order $n$ and type $I_n$ C*-algebras.}

\subsection{Fell's condition of order $n$}
\hfill\\
\vspace*{-2mm}

\noindent {\bf Definition}(e.g. see \cite{FellProperty}) An irreducible representation $\pi_0$ of a C*-algebra $A$ satisfies {\it Fell's
condition} if there exist $b\in A^+$ and an open neighbourhood $U$ of $\pi_0$ in $\hat A$
 such that $\pi(b)$ is a
rank-one projection whenever $\pi \in U$.

\medskip

\noindent {\bf Definition} (\cite{Blackadar}, \cite{FellProperty}) A C*-algebra $A$ is said to satisfy {\it Fell's condition} (also is called {\it Fell algebra}) if every irreducible
representation of $A$ satisfies Fell's condition.

\begin{remark}\label{RemarkProjection} In the definition of Fell's condition one can replace the requirement that $\pi(b)$ is a
projection by the requirement that $\pi_0(b)\neq 0$. Indeed suppose there exist $a\in A^+$ and an open neighbourhood $U$ of $\pi_0$ in $\hat A$
 such that $\pi_0(a)\neq 0$ and $\pi(a)$ has rank one whenever $\pi \in U$. Passing to a smaller neighborhood, we can assume  that the spectrum of $\pi(a)$ consists of $0$ and a number bigger  than $\|\pi_0(a)\|/2$ whenever $\pi \in U$. Let $f$ be a function which vanishes at $0$ and satisfies $f(t)=1$ for any $t\ge \|\pi_0(a)\|/2$. Let $b= f(a)$. Then $\pi(b)$ is a
rank-one projection whenever $\pi \in U$.
\end{remark}

We now introduce Fell's condition of higher order.

\medskip

\noindent {\bf Definition} An irreducible representation $\pi_0$ of $A$ satisfies {\it Fell's
condition of order $k$} if
there exist $b\in A^+$ and an open neighbourhood $U$ of $\pi_0$ in $\hat A$
 such that $\pi_0(b)\neq 0$ and $\pi(b)$ has rank not larger than $k$ whenever $\pi \in U$.

\medskip

\begin{remark} We were recently informed by Douglas Somerset that Fell's condition of  higher order is closely related with the notion of upper multiplicity introduced by Archbold \cite{Archbold}, namely $\pi$ satisfies Fell's condition of order $k$ if and only if the upper multiplicity of $\pi$ is not larger than $k$ (see  Th.2.5 in \cite{ArchboldSomersetSpielberg}).
\end{remark}

\noindent {\bf Definition} A C*-algebra $A$ is said to satisfy {\it Fell's condition of order $k$} if every irreducible
representation of $A$ satisfies Fell's condition of order $k$.

\begin{example} The C*-algebra
$$A = \{f\in C([0, 1], M_2)\;|\; f(1) \;\text{is diagonal}\}$$
satisfies Fell's condition of order 1 (in other words, it is a Fell algebra).

\medskip

Indeed, $A$ has two-dimensional irreducible representations $ev_t$ given by the evaluation at $t \in [0, 1)$ and two one-dimensional representations $\pi_1, \pi_2$ corresponding to $t=1$.
It is easy to see that all of them satisfy Fell's condition.
\end{example}

\begin{example} The C*-algebra
$$A = \{f\in C([0, 1], M_3)\;|\; f(1) = \left(\begin{array}{ccc} \lambda & & \\ & \lambda &\\ &&\mu\end{array}\right),  \lambda, \mu\in \mathbb C\}$$
satisfies Fell's condition of order 2 but not of order 1.

\medskip

Indeed, $A$ has three-dimensional irreducible representations $ev_t$ given by the evaluation at $t \in [0, 1)$ and two 1-dimensional representations $\pi_1, \pi_2$ corresponding to $t=1$ given by
$$\pi_1(x)=(x(1))_{11} = (x(1))_{22}, \;\; \pi_2(x)=(x(1))_{33}.$$  All the irreducible representations but $\pi_1$  satisfy Fell's condition, but $\pi_1$ does not, because for any $b\in A$ with $\pi_1(b)\neq 0$ we have $rank \; b(1) \ge 2$ and by the lower semicontinuity of the rank $$\liminf_{t\to 1} rank \; ev_t (b) = \liminf_{t\to 1} rank \; b(t) \ge rank \; b(1) \ge  2.$$ Thus $A$ does not satisfy Fell's condition. On the other hand, it is easy to see that it satisfies Fell's condition of order 2.
\end{example}

\begin{example} Let $T\left(M_{2^{\infty}}\right)$ be the telescopic C*-algebra associated with the UHF algebra
$$\mathbb C\subset M_2\subset M_4 \subset \ldots \subset  M_{2^{\infty}},$$ that is
$$T\left(M_{2^{\infty}}\right) = \{f\in C_0((0, \infty], M_{2^{\infty}})\;|\;  t\le i \Rightarrow  f(t)\in M_{2^i}\}.$$
  Let $$A = \{f\in T\left(M_{2^{\infty}}\right)\;|\; f(\infty) \in \mathbb C 1\}.$$ Then $A$ is CCR but does not satisfy Fell's condition of order $k$, for any $k\in \mathbb N$.

  \medskip

   Indeed, to see that $A$ is CCR we notice that any its irreducible representation, except for the one-dimensional representation  corresponding to the point $\infty$, must not vanish on the ideal of all functions vanishing at infinity and hence not vanish at the ideal of all functions vanishing at $[n, \infty]$, for some $n$, and therefore must be the evaluation at some point, hence is finite-dimensional.

  \noindent To see that $A$ does not satisfy Fell's condition of any order, we notice that the irreducible representation $\pi_{\infty}$ corresponding to $\infty$ (that is a representation given by the formula $\pi_{\infty}(x) =(x(\infty))_{11}$) does  not satisfy Fell's condition of any order. Indeed  let $b\in A$  with $\pi_{\infty}(b)\neq 0$. Then $b(\infty)$ is invertible and hence $b(t)$ is invertible (and hence is of full rank) for all sufficiently big $t$.  This implies that in any neighborhood of $\pi_{\infty}$ there are representations which take values at $b$ of arbitrary big rank.

  \end{example}

One can wonder whether in the definition of Fell's condition of higher order one can additionally require $\pi(b)$ to be a projection, analogously to Fell's condition of order 1 (see Remark \ref{RemarkProjection}). It turns out that it is not the case anymore when the order is bigger than 1, as the following example shows.

\begin{example} Let $\overline S$ be the topologists's sine curve, that is the closure of
$$S = \{(x, sin \frac{1}{x})\;|\; x \in (0, 1]\}.$$
Let $$A = \{f\in C(\overline S, M_2) \;|\; \exists \; g \in C_0[-1, 1) \; \text{s. t.} \; f((0, t)) = \left(\begin{array}{cc} g(t) &\\& g(0)\end{array}\right), \forall t\in [-1, 1]\}.$$
Then for any $f\in A$, $f((0, 0)) \in \mathbb C 1_2.$ Let $\pi = (ev_{(0,0)})_{11} \in \hat A$. We notice that for any $(x_n, y_n)\in S$ such that $(x_n, y_n) \to (0, x)$, and any $f\in A$ such that $f((x_n, y_n)) =0$ we have $$0=f((0, x)) = \left(\begin{array}{cc} g(x) &\\& g(0)\end{array}\right),$$ which implies that $f((0, 0)) = \left(\begin{array}{cc} g(0) &\\& g(0)\end{array}\right) =0.$ In other words,
\begin{equation}\label{ExampleObs} ev_{(x_n, y_n)} \to \pi \end{equation} in $Prim A$, which coincides with $\hat A$ since $A$ is subhomogeneous.

Now, since $A$ is $2$-subhomogeneous, it satisfies Fell's condition of order 2. However we will show that there is no $h\in A^+$ which locally is a non-zero projection (of rank nor larger than 2) around $\pi$. Assume, for the sake of contradiction, that there is  $h\in A^+$ and a neighborhood $U$ of $\pi$ such that $\rho(h)$ is a non-zero projection, for any $\rho\in U$. Let $\rho_n = ev_{(\frac{1}{\pi/2 + 2\pi n}, 1)}$ and $\rho'_n = ev_{(\frac{1}{2\pi n}, 0)}$.  By (\ref{ExampleObs}) and by the assumption, there is $N$ such that  $\rho_n(h)$ and $\rho'_n(h)$ are non-zero projections, for any $n>N$. Since $$\rho_n(h) = h((\frac{1}{\pi/2 + 2\pi n}, 1)) \to h((0, 1)) = \left(\begin{array}{cc} \ast &\\& 0\end{array}\right),$$ it follows that $$rank \; h((\frac{1}{\pi/2 + 2\pi n}, 1)) = 1$$ for sufficiently large $n>N$. Since $$\rho'_n(h) = h((\frac{1}{2\pi n}, 0)) \to h((0, 0))\in \mathbb C 1_2,$$ it follows that $$rank \; h((\frac{1}{2\pi n}, 1)) = 2$$ for sufficiently large $n>N$, which is impossible, because for any $n>N$, the continuous path of the projections $h((t, sin \frac{1}{t}))$, $t\in [\frac{1}{\pi/2 + 2\pi n}, \frac{1}{2\pi n}]$, connects $h((\frac{1}{\pi/2+2\pi n}, 0))$ with $h((\frac{1}{2\pi n}, 1))$. Contradiction.

\end{example}

\subsection{Type $I_n$ algebras}

Let $A$ be a C*-algebra. Recall that an element $x\in A$ is {\it abelian}
if the hereditary C*-subalgebra $\overline{x^*Ax}$ is commutative (\cite{Blackadar}).
Equivalently (\cite{Blackadar}, Prop. IV.1.1.7), $x$ is abelian if and only if  $rank \; \pi(a) \le 1$ for each irreducible representation $\pi$ of $A$.
$A$ is called {\it  type $I_0$} if it is generated by its  abelian elements (\cite{Blackadar}).

Now we introduce C*-algebras of higher type.

\medskip

\noindent {\bf Definition} An element $x\in A$ has {\it global rank not larger than $n$} if $rank \; \pi(a) \le n$ for each irreducible representation $\pi$ of $A$.

 \medskip

\noindent {\bf Definition} A C*-algebra $A$ is {\it type $I_n$} if it is generated  by elements of global rank not larger than $n+1$.

\medskip

\noindent (In fact "generated" can be replaced by "spanned" as the rank of product is always not larger than the minimum of the ranks of multipliers).

Clearly any type $I_n$ algebra is CCR.


\subsection{Fell's condition of order $n$ is equivalent to being type $I_{n-1}$}
The proof below goes along the lines of the proof of the fact [\cite{FellProperty}, Th. 3.3]
that Fell's property is equivalent to being type $I_0$.

\begin{lemma}\label{CharacterizationOfFell} Let $A$ be  a C*-algebra and $\pi \in \hat A$. The following are equivalent:

(i) $\pi$ satisfies Fell's condition of order $n$;

(ii) there exists $a\in A^{+}$ of global rank not larger than $n$ such that $\pi(a)\neq 0$.
\end{lemma}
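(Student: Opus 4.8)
The implication (ii)$\Rightarrow$(i) is essentially immediate: if $a\in A^+$ has global rank not larger than $n$ and $\pi(a)\neq 0$, then the set $U = \{\rho\in\hat A : \rho(a)\neq 0\}$ is open (it is the complement of the hull of the closed ideal generated by... more precisely, $U$ corresponds to $\{\rho : \|\rho(a)\|>0\}$, which is open since $\rho\mapsto\|\rho(a)\|$ is lower semicontinuous on $\hat A$). This $U$ contains $\pi$ by hypothesis, and for every $\rho\in U$ we have $1\le\operatorname{rank}\rho(a)\le n$. So $a$ and $U$ witness Fell's condition of order $n$ for $\pi$. No real work here.

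The substantive direction is (i)$\Rightarrow$(ii). Suppose $b\in A^+$ and an open neighbourhood $U\ni\pi$ are such that $\pi(b)\neq 0$ and $\operatorname{rank}\rho(b)\le n$ for all $\rho\in U$. The problem is that $b$ need not have globally bounded rank: outside $U$ the rank of $\rho(b)$ may be large or infinite. The plan is to cut $b$ down by an element that is supported (as far as its action is concerned) inside $U$, so that the product still sees $\pi$ but vanishes off $U$. Concretely, I would choose a function/element that carves out a smaller neighbourhood: since $\hat A$ (or $\operatorname{Prim}A$) need not be Hausdorff, one works with ideals. Let $I$ be the (closed, two-sided) ideal corresponding to the open set $U$, so that $\hat I \cong U$; then $b$ may be replaced by an element of $I$ — for instance, using that $I$ has an approximate unit and a functional-calculus/quasicentral argument, pick $e\in I^+$ with $\pi(e)\neq 0$ and form $a_0 = e^{1/2} b\, e^{1/2}$ (or $(ebe)$, suitably). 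Then $a_0\in I^+$, $\pi(a_0)\neq 0$ provided $\pi(e)$ is chosen not to kill the range of $\pi(b)$, and for $\rho\notin U$ we have $\rho(a_0)=0$ while for $\rho\in U$, $\operatorname{rank}\rho(a_0)\le\operatorname{rank}\rho(b)\le n$. Hence $a_0$ has global rank $\le n$, which is exactly (ii).

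The one point that needs care — and which I expect to be the main obstacle — is arranging that the cut-down element $a_0$ does not accidentally annihilate $\pi$, i.e. that $\pi(a_0)\neq 0$. Since $\pi(b)\neq 0$ is a positive operator with nonzero range, and $\pi(I)$ acts nondegenerately on the representation space of $\pi$ (because $\pi\in\hat I$, as $\pi\in U$), one can find $e\in I^+$ with $\pi(e)$ acting as (close to) the identity on a large enough subspace so that $\pi(e^{1/2}be^{1/2})\neq 0$: concretely, take a state-type argument or simply pick a unit vector $\xi$ with $\langle\pi(b)\xi,\xi\rangle>0$ and $e$ with $\pi(e)\xi$ close to $\xi$, using that $\pi|_I$ is irreducible and nondegenerate. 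One should also replace $a_0$ by $f(a_0)$ for a suitable continuous $f$ vanishing at $0$ if one wants the rank bound to be robust, though for plain global rank $\le n$ that is not needed since functional calculus only shrinks ranks. I would present this via the ideal $I$ with $\hat I = U$ and quote that $\pi$ restricts to an irreducible representation of $I$; that keeps the argument clean and avoids any appeal to local Hausdorffness of $\hat A$.
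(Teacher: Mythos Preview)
Your proposal is correct and follows essentially the same route as the paper: for (i)$\Rightarrow$(ii) you cut $b$ down by an element of the ideal $I$ corresponding to the open set $U$, exactly as the paper does (the paper writes $I=\bigcap_{\sigma\notin U}\ker\sigma$ and uses an approximate unit $\{i_\lambda\}$ of $I$, noting $\pi(i_\lambda)\to 1$ strongly to guarantee $\pi(i_\lambda b i_\lambda)\neq 0$, which is a slightly cleaner version of your vector argument). For (ii)$\Rightarrow$(i) the paper is even simpler than your lower-semicontinuity argument: it just takes $U=\hat A$.
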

\begin{proof} (i) $\Rightarrow$ (ii): Since $\pi$ satisfies Fell's condition of order  $n$, there is $b\in A$ and a neighborhood $U$ of $\pi$ such that $\pi(b)\neq 0$ and  $ rank \;\rho(b) \le n$, for any $\rho\in U$. Let
  $$I = \bigcap_{\sigma\notin U} \ker \sigma.$$ Since $\pi\in U$, $\pi$ does not vanish on $I$. Then for any approximate unit $\{i_{\lambda}\}$ in $I$, $\pi(i_{\lambda})$ converges to the identity in the strong operator topology. Hence we can choose $\lambda$ such that $\pi(i_{\lambda}bi_{\lambda}) \neq 0.$ Let $a= i_{\lambda}bi_{\lambda}$. Then $rank \; \sigma(a) = 0$, for any $\sigma \notin U$, and $rank \; \rho(a) \le rank \; \rho(b) \le n,$ for any $\rho\in U$. Hence $a$ has global rank not larger than $n$ and $\pi(a)\neq 0.$

  (ii) $\Rightarrow$ (i): By taking $b = a$ and $U = \hat A$, we see that (i) holds.
  \end{proof}

\begin{proposition}\label{Fell}  A C*-algebra $A$ satisfies Fell's condition of order $n$ if and only if  $A$ is type $I_{n-1}$.

\end{proposition}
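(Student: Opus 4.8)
The plan is to read off both implications from Lemma~\ref{CharacterizationOfFell}, which already says that $\pi$ satisfies Fell's condition of order $n$ if and only if $\pi$ does not annihilate some positive element of global rank at most $n$. Let $S$ denote the set of elements of $A$ of global rank at most $n$ and let $B=\overline{\operatorname{span}}\,S$. The first step I would carry out is to observe that $B$ is a closed two-sided ideal of $A$: since $\operatorname{rank}\pi(x^{*})=\operatorname{rank}\pi(x)$ and $\operatorname{rank}\pi(xy)\le\min\{\operatorname{rank}\pi(x),\operatorname{rank}\pi(y)\}$ for every irreducible representation $\pi$, the set $S$ is self-adjoint and satisfies $AS\cup SA\subseteq S$ and $SS\subseteq S$, so passing to closed linear spans makes $B$ a self-adjoint subalgebra stable under multiplication by $A$ on either side. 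Moreover, by the parenthetical remark following the definition of type $I_{n-1}$, the algebra $A$ is type $I_{n-1}$ exactly when $B=A$. So the proposition reduces to the assertion that $A$ satisfies Fell's condition of order $n$ if and only if $B=A$.

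For the direction assuming $B=A$, fix $\pi_{0}\in\hat A$. Since $\pi_{0}\ne0$ and is continuous, it cannot vanish identically on $\operatorname{span}S$, so there is $x\in S$ with $\pi_{0}(x)\ne0$. Then $a:=x^{*}x\in A^{+}$ satisfies $\operatorname{rank}\pi(a)=\operatorname{rank}\pi(x)\le n$ for every irreducible $\pi$, hence has global rank at most $n$, while $\pi_{0}(a)=\pi_{0}(x)^{*}\pi_{0}(x)\ne0$; by the implication (ii)$\Rightarrow$(i) of Lemma~\ref{CharacterizationOfFell}, $\pi_{0}$ satisfies Fell's condition of order $n$, and since $\pi_{0}$ was arbitrary, so does $A$. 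For the converse, assume $A$ satisfies Fell's condition of order $n$ and suppose for contradiction that $B\ne A$. Then $A/B$ is a nonzero C*-algebra, so it has an irreducible representation, and composing it with the quotient map $A\to A/B$ produces an irreducible representation $\pi$ of $A$ with $B\subseteq\ker\pi$. By hypothesis $\pi$ satisfies Fell's condition of order $n$, so Lemma~\ref{CharacterizationOfFell} yields $a\in A^{+}$ of global rank at most $n$ with $\pi(a)\ne0$; but $a\in S\subseteq B\subseteq\ker\pi$, a contradiction. Hence $B=A$, i.e. $A$ is type $I_{n-1}$.

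I do not expect a serious obstacle once Lemma~\ref{CharacterizationOfFell} is in hand. The only point demanding care is the bookkeeping in the first paragraph: that global rank is invariant under taking adjoints and submultiplicative under products, which is precisely what upgrades the linear span $B$ of low-rank elements to a two-sided ideal; the remainder uses nothing beyond the standard fact that a proper closed ideal of a C*-algebra lies in the kernel of some irreducible representation. For $n=1$ this specialises to the proof of [\cite{FellProperty}, Th.~3.3], with abelian elements replaced by elements of global rank at most $n$.
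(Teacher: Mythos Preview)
Your proposal is correct and follows essentially the same route as the paper: both arguments identify the closed linear span (equivalently, the C*-algebra generated) by elements of global rank at most $n$ as a closed two-sided ideal, and then use Lemma~\ref{CharacterizationOfFell} to see that this ideal equals $A$ precisely when every irreducible representation satisfies Fell's condition of order $n$. Your write-up is slightly more explicit about why the ideal property holds and about passing to a positive element via $x^{*}x$, but the underlying argument is the same.
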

\begin{proof}
{\bf "Only if"}: Let $J\subseteq A$ be the $C^*$-algebra generated by all elements of global rank not larger than $n$. By Lemma \ref{CharacterizationOfFell}, no irreducible representation of $A$ vanishes on $J$. Since  $J$ is in fact an ideal, it follows that $J=A$.

  {\bf "If"}: if  $A$ is type $I_{n-1}$, then for any $\pi\in \hat A$  there is $a\in A^{+}$ of global rank not larger than $n$ such that $\pi(a) \neq 0$. By Lemma \ref{CharacterizationOfFell} we are done.
  \end{proof}

\section{$A$ is CCR and $F(A)$ is $k$-subhomogeneous  $\Rightarrow$ $A$ satisfies Fell's condition of order
 $k$.}

\begin{theorem}\label{SubhomogeneousImpliesFell}
Let $A$ be a  separable CCR C*-algebra. If $F(A)$ is $k$-subhomogeneous, then $A$ satisfies Fell's condition of order
 $k$.
\end{theorem}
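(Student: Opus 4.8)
The plan is to argue the contrapositive: assuming $A$ is separable, CCR, and does \emph{not} satisfy Fell's condition of order $k$, and fixing an irreducible representation $\pi_0\in\hat A$ failing Fell's condition of order $k$, I would produce inside $F(A)$ a nilpotent element of order exactly $k+1$; by Theorem~\ref{Fact} this shows $F(A)$ is not $k$-subhomogeneous. Concretely, I would construct a central sequence of contractions $(x_n)$ in $A$, an element $b\in A$ and a constant $c>0$ with $\|x_n^{k+1}\|\to 0$ and $\|x_n^{k}b\|\ge c$ for all large $n$. Writing $x=(x_n)\in A_\infty$, the first two facts make $\pi_\omega(x)\in A'\cap A_\omega$ with $\pi_\omega(x)^{k+1}=0$ in $A_\omega$, while the last gives $\|\pi_\omega(x)^{k}b\|\ge c$, so $\pi_\omega(x)^{k}\notin Ann(A,A_\omega)$; hence the image of $x$ in $F(A)$ is a nilpotent of order $k+1$.

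\emph{Extracting the data.} Since $A$ is CCR, $\pi_0(A)=K(H_{\pi_0})$, so I would choose $b\in A^+$ with $\|b\|\le 1$ and $\pi_0(b)$ a rank-one projection. Using that $\pi_0$ fails Fell's condition of order $k$, every open neighbourhood $U$ of $\pi_0$ must then contain an irreducible representation $\sigma$ with $\operatorname{rank}\chi_{[1/2,1]}(\sigma(b))\ge k+1$: otherwise, for $g$ continuous with $g=0$ on $[0,1/2]$ and $g=1$ on $[3/4,1]$, the element $g(b)\in A^+$ would satisfy $\pi_0(g(b))=\pi_0(b)\ne 0$ and $\operatorname{rank}\sigma(g(b))\le k$ throughout some neighbourhood of $\pi_0$, i.e.\ $\pi_0$ would satisfy Fell's condition of order $k$. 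Since $A$ is separable, $\hat A$ is second countable; I would fix a decreasing neighbourhood basis $U_1\supseteq U_2\supseteq\cdots$ of $\pi_0$ and choose $\sigma_n\in U_n$ with $\operatorname{rank}\chi_{[1/2,1]}(\sigma_n(b))\ge k+1$, so that $\sigma_n\to\pi_0$ in $\hat A$. I also fix a dense sequence $(a^{(i)})$ in the unit ball of $A$.

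\emph{The nilpotents and the pull-back --- the main obstacle.} For each $n$, pick orthonormal vectors $\eta_n^{(1)},\dots,\eta_n^{(k+1)}$ in $\operatorname{ran}\chi_{[1/2,1]}(\sigma_n(b))\subseteq H_{\sigma_n}$ and set $N_n=\sum_{i=1}^{k}\eta_n^{(i)}(\eta_n^{(i+1)})^*\in K(H_{\sigma_n})=\sigma_n(A)$: a nilpotent contraction of order $k+1$ with $N_n^{k}=\eta_n^{(1)}(\eta_n^{(k+1)})^*$ and $\|N_n^{k}\sigma_n(b)\|\ge 1/2$. By Theorem~\ref{LiftabilityOfNilpotents}, each $N_n$ lifts to a nilpotent contraction of order $k+1$ in $A$, but such a lift is completely uncontrolled on representations other than $\sigma_n$ and in general is far from being approximately central. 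The technical heart of the argument --- and the step I expect to be the main obstacle --- is to convert $N_n$ into a contraction $x_n\in A$ that simultaneously satisfies (a) $\|x_n^{k+1}\|<1/n$, (b) $\|[x_n,a^{(i)}]\|<1/n$ for all $i\le n$, and (c) $\|\sigma_n(x_n)-N_n\|<1/n$, hence $\|x_n^{k}b\|\ge 1/4$ for large $n$. The mechanism for (b) should be that, as $\sigma_n\to\pi_0$ and $\operatorname{ran}\chi_{[1/2,1]}(\sigma_n(b))$ has dimension $\ge k+1$ while $\pi_0(b)$ is of rank one, the representations $\sigma_n$ act on these spectral subspaces increasingly like amplifications $\pi_0\otimes 1_{m_n}$ with $m_n\ge k+1$, so that $N_n$, installed in the multiplicity direction, asymptotically commutes with $\sigma_n(A)$; one then localises the lift near $\pi_0$ --- cutting it down by a quasicentral approximate unit of the ideal of $A$ supported on a small neighbourhood of $\pi_0$ inside which $\sigma_n$ sits --- to obtain approximate commutation with all of $A$. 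Property (a) is recovered after such a cut-down using the stability of nilpotent contractions (Corollary~\ref{StabilityOfNilpotents}) together with the elementary bound of $\|(uTu)^{m}\|$ in terms of $\|T^{m}\|$ and $\|[u,T]\|$. Making (a), (b), (c) coexist --- in particular arranging $b$ to be essentially supported near $\pi_0$ while keeping the rank lower bound, and keeping the cut-down element nilpotent --- is the delicate point.

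\emph{Conclusion.} With $x=(x_n)$: (b) gives $\pi_\omega(x)\in A'\cap A_\omega$; (a) gives $\pi_\omega(x)^{k+1}=\pi_\omega((x_n^{k+1}))=0$ in $A_\omega$; and (c) gives $\|\pi_\omega(x)^{k}b\|=\lim_{n\to\omega}\|x_n^{k}b\|\ge 1/4>0$, so $\pi_\omega(x)^{k}\notin Ann(A,A_\omega)$. Hence the image of $x$ in $F(A)$ is a nilpotent element of order exactly $k+1$, and by Theorem~\ref{Fact}, $F(A)$ is not $k$-subhomogeneous, which completes the contrapositive.
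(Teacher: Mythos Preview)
Your overall architecture matches the paper's exactly: argue the contrapositive, and from the failure of Fell's condition of order $k$ at some $\pi_0\in\hat A$ manufacture an element of $F(A)$ which is nilpotent of order precisely $k+1$, then invoke Theorem~\ref{Fact}. Your reduction to finding, for each finite $\mathcal G$ and $\epsilon$, a contraction $x\in A$ with $\|x^{k+1}\|$ small, $\|[x,\mathcal G]\|$ small, and $\|x^k b\|$ bounded below is also exactly the paper's reduction (with a strictly positive $h$ in place of your $b$).

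The gap is precisely the step you flag as ``the main obstacle'': the claim that the representations $\sigma_n$ act on the relevant spectral subspaces ``increasingly like amplifications $\pi_0\otimes 1_{m_n}$'' is the whole content of the theorem, and nothing you have written justifies it. Convergence $\sigma_n\to\pi_0$ in the Jacobson topology gives no comparison of $\sigma_n(a)$ with $\pi_0(a)$ on the level of operators, and cutting down by a quasicentral approximate unit of an ideal supported near $\pi_0$ does not by itself produce approximate commutation of your lifted $N_n$ with $\mathcal G$ --- it only makes the lift small away from $\pi_0$, not central near it. In short, you have correctly located where the work lies but have not supplied the mechanism.

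The paper resolves this with a tool you do not invoke: the non-commutative Urysohn lemma of Eilers--Loring--Pedersen. After first passing to a hereditary subalgebra $A_m=\overline{(h-1/m)_+A(h-1/m)_+}$ on which $\pi$ becomes finite-dimensional (say $\pi|_{A_m}\colon A_m\to M_d$), this lemma yields a proper $\ast$-homomorphism $\overline\alpha$ from the mapping-cone algebra $T(\mathbb C,M_d)$ into $A_m$ covering $\pi|_{A_m}$. Every $g\in\mathcal G$ then decomposes as $g_0+\overline\alpha(f_1\otimes\pi|_{A_m}(g))$ with $g_0$ in the kernel $J$, and $g_0$ can be pushed, up to $\epsilon$, into a hereditary subalgebra orthogonal to $\overline\alpha(f_n\otimes 1_d)$. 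Only \emph{after} this parametrization does one use the failure of Fell's condition: applied to $y=\overline\alpha(f_n\otimes e_{11})$, it yields a single $\rho\in\hat A$ with $\operatorname{rank}\rho(y)\ge k+1$, which forces $\rho|_{A_m}\circ\overline\alpha$ to contain at least $k+1$ point evaluations at parameters close to $2$. This is what makes your amplification intuition rigorous: one gets $\rho|_{A_m}(g)\approx(\pi|_{A_m}(g))^{\oplus k+1}\oplus r_g$, so the nilpotent $z=\sum_{j=1}^{k}1_d\otimes e_{j,j+1}$ sitting in the multiplicity direction genuinely almost commutes with $\rho(\mathcal G)$, and a lift of $z$ to $A_m$ finishes the job. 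Note the reversed order of operations compared to your outline: the paper fixes $\mathcal G,\epsilon$, builds the parametrized neighbourhood, and only then selects one $\rho$ --- it never uses a converging sequence $\sigma_n\to\pi_0$.
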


Below  we use notation $\{e_{ij}\}$ for matrix units in any matrix algebra.
\begin{proof}
Assume $\pi \in\hat{A}$ does not satisfy Fell's condition of order $k$ and let $h\in A$ be a strictly positive element with $\|\pi(h)\|=1$.
By Theorem \ref{Fact}, it will be sufficient to construct  a nilpotent element of order $k+1$ but not of order $k$ in $F(A)$.
Given a finite subset $\mathcal{G}\subset A$ and $\epsilon>0$, it suffices to find $x\in A_1$ with $\|x^{k+1}\|\le \epsilon$,  $\|x^k h\| \geq 1-\epsilon$ and $\|[x,\mathcal{G}]\|\leq\epsilon$.

Without loss of generality we may assume that $\mathcal{G}\subset A_m:=\overline{(h-1/m)_+A(h-1/m)_+}$ for some $m>2/\epsilon$ and that $(h-1/m)_+\in \mathcal{G}$.
Since $A$ is assumed to be CCR, $\pi(h)$ is compact and hence $\pi((h-1/m)_+)$ has finite rank.
We can therefore regard the restriction $\pi|_{A_m}$ as a finite-dimensional representation $A_m \to M_d$ for some $d$.
We may clearly further assume that $\pi|_{A_m}((h-1/m)_+)$ is diagonal with
\[
(\pi|_{A_m}((h-1/m)_+))_{11} = \|\pi|_{A_m}((h-1/m))_+\| = \|\pi((h-1/m)_+)\|=1-1/m.
\]

The reduction to the finite-dimensional case allows us to employ the non-commutative Urysohn lemma ([\cite{ELP}, Th.2.3.3]) as a technical tool to parameterize a neighbourhood of $\pi$.
Concretely, we get a commutative diagram with exact rows
\[
\xymatrix{
0 \ar[r] & J \ar[r] & A_m \ar[r]^{\pi_{|A_m}} & M_d \ar[r] & 0 \\
0 \ar[r] & S(\mathbb{C},M_d) \ar[u]^\alpha \ar[r] & T(\mathbb{C},M_d) \ar[r]^{ev_2} \ar[u]^{\overline{\alpha}}& M_d \ar[r] \ar@{=}[u] & 0
}
\]
where $$T(\mathbb{C},M_d)=\{f\in C_0((0,2],M_d)\colon f(t)\in\mathbb{C}\cdot 1_d \;\text{for all}\; t\leq 1\}$$  and
$$S(\mathbb{C},M_d) = \{f\in C_0((0,2),M_d)\colon f(t)\in\mathbb{C}\cdot 1_d \;\text{for all}\; t\leq 1\},$$ with a proper *-homomorphism $\alpha$ (meaning that the hereditary C*-subalgebra $her_J(image(\alpha))$ of $J$ generated by the image of $\alpha$ equals all of $J$).

Denote by $f_1$ the scalar function on $[0,2]$ which vanishes on $[0,1]$, satisfies $f_1(2)=1$ and is linear in between.
We can thus decompose each element $g\in \mathcal G$ as
$$g=g_0+\overline{\alpha}(g_1)$$
with $g_0\in J$ by defining $$g_1=f_1\otimes \pi|_{A_m}(g)\in T(\mathbb{C},M_d).$$
Let $$B_n = \{f\in S(\mathbb{C},M_d)\;|\; supp \; f \subseteq [0,2-1/n]\}.$$ Then $$S(\mathbb{C},M_d) = \overline{\bigcup_n B_n}$$ and
$J=her\left(\alpha(S(\mathbb{C},M_d)))\right)=\overline{\bigcup_n her(\alpha(B_n)}).$ Therefore, since $G$ is finite,  there is $n\in \mathbb N$ such that for each $g\in \mathcal{G}$ there is  $g'_0\in her(\alpha(B_n))$  such that
\begin{equation}\label{SubhomogeneousImpliesFellNewEq1}\|g_0-g_0'\|\le \epsilon.\end{equation} We can also assume that \begin{equation}\label{SubhomogeneousImpliesFellNewEq2} \frac{1}{n}\le \epsilon.\end{equation}
In particular, using the scalar function $f_n$ which vanishes on $[0,2-1/n]$, satisfies $f_n(2)=1$ and is linear in between, for each $g\in \mathcal{G}$ we conclude that $g_0'$ and $\overline{\alpha}(f_n\otimes 1_d)$ are orthogonal to each other.

Let $y:=\overline{\alpha}(f_n\otimes e_{11})\in A_m \subset A$.
Since $\pi(y) = e_{11}\neq 0$ and by assumption $\pi$ does not satisfy Fell's condition of order $k$,
there is an irreducible representation $\rho \in \hat A$ such that
\[
rank(\rho(y))\geq k+1.\]
We consider the restriction $\rho|_{A_m}$, which can again be regarded as a finite-dimensional, irreducible representation $A_m \to M_{d'}$ for suitable $d'$.
Hence the composition $\rho|_{A_m}\circ \overline{\alpha}$ decomposes as
\[
\rho|_{A_m}\circ \overline{\alpha} \sim \oplus_{j\in J} ev_{t_j}
\]
with finite index set $J\subset [0,2]$.
Since the support of $f_n\otimes e_{11}$ is contained in $[2-1/n,2]$, the rank condition above implies that  $J$ contains at least $k+1$ many (not necessarily different) $t_j$'s with $t_j\in(2-1/n,2]$.
After unitary conjugation we can therefore write
\[
\rho|_{A_m}\circ \overline{\alpha} = (\bigoplus_{j=1}^{k+1}ev_{t_j})\oplus\tilde\varrho
\]
with $t_1,...,t_{k+1}$ in $(2-1/n,2]$. This implies that for each $g=g_0+\overline{\alpha}(g_1)\in \mathcal{G}$ we find
\begin{equation}\label{SubhomogeneousImpliesFellEq1}
\rho|_{A_m}(\overline{\alpha}(g_1)) = (\oplus_{j=1}^{k+1} g_1(t_j)) \oplus \tilde\varrho(g_1) = \oplus_{j=1}^{k+1}((t_j-1)\pi|_{A_m}(g)) \oplus \tilde\varrho(g_1).
\end{equation}
Moreover, as $\rho|_{A_m}(\overline{\alpha}(f_n\otimes 1_d)) = (\oplus_{j=1}^{k+1} f_n(t_j)1_d) \oplus \tilde\varrho(f_n)$ with each $f_n(t_j)>0$, orthogonality of $g_0'$ and $\overline{\alpha}(f_n)$ implies that in the $2\times 2$ block decomposition of $\rho|_{A_m}(g_0')$  the only possibly non-zero block is the $(2,2)$-block. Combining this with (\ref{SubhomogeneousImpliesFellEq1}), (\ref{SubhomogeneousImpliesFellNewEq1}) and (\ref{SubhomogeneousImpliesFellNewEq2}) we find   each $\rho|_{A_m}(g)$
to be of the form
 \[
\rho|_{A_m}(g) = \left((\pi|_{A_m}(g))^{\oplus k+1} \oplus r_{g}\right) + S_{g}
\]
for some $r_{g}\in M_{d'-(k+1)d}$ and $S_{g}\in M_{d'}$ such that $\|S_{g}\|\le 2 \epsilon$.

Consequently, the element $z= \sum_{j=1}^k 1_d\otimes e_{j(j+1)}\in M_{d'}$ is a nilpotent  contraction of order $k+1$ and commutes with $\rho(\mathcal{G})$ up to $2\epsilon$.
By \cite{LoringShulmanNilpVar} we can
lift $z$ to a nilpotent contraction $x$ of order $k+1$ in $A_m$   that commutes with $\mathcal{G}$ up to $2\epsilon$.
Since $(h-1/m)_+$ is also an element of $\mathcal{G}$, we further verify
\begin{align*}
\|x^kh\| & \geq \|x^k(h-1/m)_+\| - 1/m
 \geq \|\rho|_{A_m}(x^k(h-1/m)_+)\| - 1/m \\
& \ge \|(1_d\otimes e_{1,k+1}) ((\pi|_{A_m}((h-1/m)_+))^{\oplus k+1} )\| - \epsilon -1/m \\
& = \|\pi|_{A_m}((h-1/m)_+)\| -\epsilon - 1/m \\
& = (1-1/m) - \epsilon - 1/m  > 1- 2\epsilon.
\end{align*}
\end{proof}

\section{$A$ is type $I_n$ $\Rightarrow$ $F(A)$ is $(n+1)$-subhomogeneous}

The first lemma is trivial.
\begin{lemma}\label{trivial} Let $x, a$ be contractions in a C*-algebra and $\delta \ge 0$. Then for any  $k\in \mathbb N$ the following holds:

(i) $\|[x, a]\|\le \delta$ ${\Rightarrow}$ $\|[x, a^k]\| \le k\delta$;

(ii) $\|[x, a]\|\le \delta$  ${\Rightarrow}$ $(axa)^k = A_k + ax^ka^{2k-1},$ where $\|A_k\| \le k(k-1)\delta; $

(iii) $\|[x, a]\|\le \delta$ and $x^k=0$ ${\Rightarrow}$ $\|(axa)^k\| \le k(k-1)\delta;$

(iv) $\|x-a\|\le \delta$ $\Rightarrow$ $\|x^k - a^k\| \le k\delta$;

(v) $\|[x, a]\|\le \delta$  ${\Rightarrow}$ $\|(xa)^k\| \le \|x^ka^k\| + \frac{(k-1)k}{2}\delta$.
\end{lemma}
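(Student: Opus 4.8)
The plan is to reduce everything to the single elementary fact that a commutator with a power expands as $[x,a^k]=\sum_{j=0}^{k-1}a^{j}[x,a]a^{k-1-j}$, so that, using $\|a\|\le 1$, each summand has norm at most $\delta$. This is exactly (i); alternatively (i) follows from the one-line induction $[x,a^k]=[x,a^{k-1}]a+a^{k-1}[x,a]$. Every other item is then obtained by pushing the factors $x$ to the left past the powers of $a$ that separate them and carefully accounting for the accumulated commutator errors via (i).

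For (v) I would prove by induction on $k$ that $(xa)^k=x^ka^k+F_k$ with $\|F_k\|\le \tfrac{k(k-1)}{2}\delta$. In the inductive step one writes $(xa)^{k+1}=(x^ka^k+F_k)xa$, substitutes $a^kx=xa^k+[a^k,x]$, invokes $\|[a^k,x]\|\le k\delta$ from (i), and sets $F_{k+1}=x^k[a^k,x]a+F_kxa$; then $\|F_{k+1}\|\le k\delta+\tfrac{k(k-1)}{2}\delta=\tfrac{(k+1)k}{2}\delta$. The bound $\|(xa)^k\|\le\|x^ka^k\|+\|F_k\|$ is (v).

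For (ii), the key observation is the bracketing $(axa)^k=a\,(xa^2)^{k-1}\,xa$. An analogous induction gives $(xa^2)^m=x^ma^{2m}+E_m$ with $\|E_m\|\le m(m-1)\delta$, each step costing $\|[a^{2m},x]\|\le 2m\delta$. Taking $m=k-1$ and then moving the remaining $x$ past $a^{2k-2}$, at a further cost of at most $(2k-2)\delta$, one arrives at $(axa)^k=ax^ka^{2k-1}+A_k$ with $A_k=ax^{k-1}[a^{2k-2},x]a+aE_{k-1}xa$, and hence $\|A_k\|\le(2k-2)\delta+(k-1)(k-2)\delta=k(k-1)\delta$. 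Item (iii) is then immediate: when $x^k=0$ the term $ax^ka^{2k-1}$ vanishes, leaving $(axa)^k=A_k$, so $\|(axa)^k\|\le k(k-1)\delta$. Finally (iv) is the textbook telescoping $x^k-a^k=\sum_{j=0}^{k-1}x^j(x-a)a^{k-1-j}$, in which each of the $k$ summands has norm at most $\delta$.

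None of this presents a genuine obstacle; the only point demanding a little care is getting the quadratic constants exactly right --- the $k(k-1)$ in (ii) and (iii) and the $\tfrac{k(k-1)}{2}$ in (v) --- which is why I would organize (ii) and (v) as clean inductions rather than expanding the iterated products by hand.
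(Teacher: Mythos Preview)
Your proof is correct and follows essentially the same route as the paper: item (i) by the Leibniz-type identity, (v) by the induction $(xa)^k=x^ka^k+F_k$, (iii) as an immediate consequence of (ii), and (iv) by the standard telescoping/induction. The only cosmetic difference is in (ii): the paper inducts directly on $(axa)^k$ via $(axa)^{N+1}=(A_N+ax^Na^{2N-1})axa$ and the single commutator $[a^{2N},x]$, whereas you first rewrite $(axa)^k=a(xa^2)^{k-1}xa$ and run a separate induction for $(xa^2)^m$; both arrive at the same $A_k$ with $\|A_k\|\le k(k-1)\delta$.
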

\begin{proof} (i) By induction: suppose for $k\le l-1$ it is proved. For $k=l$ we have
$[x, a^l] = [x, a^{l-1}]a + a^{l-1}[x, a]$ and therefore $\|[x, a^l]\| \le (l-1)\delta + \delta = l\delta$.

(ii) By induction: For $k=1$ it is clear. Suppose it is proved for $k\le N$. We have
\begin{multline*} (axa)^{N+1} = (axa)^Naxa = (A_N+ ax^Na^{2N-1})axa \\ = A_Naxa + ax^N[a^{2N},x]a + ax^{N+1}a^{2N+1} = A_{N+1} + ax^{N+1}a^{2N+1},
\end{multline*}
where by (i)  $$\|A_{N+1}\|\le N(N-1)\delta + 2N\delta = (N+1)N\delta.$$

(iii) follows from (ii).

(iv) By induction: suppose it is proved for $k\le N$. We have $$\|x^{N+1} - a^{N+1}\|\le \|(x-a)x^N\| + \|a(x^N-a^N)\|\le \delta + N\delta = (N+1)\delta.$$

(v) We will show by induction that $(xa)^k =x^ka^k  + E_k$, where $\|E_k\| \le  \frac{(k-1)k}{2}\delta$. For $k=1$ it holds, and we assume it holds for $k\le N$. Then for $k=N+1$ we have
$$(xa)^{N+1} = xax^Na^N + xaE_N =  x^{N+1}a^{N+1} + x[a, x^N]a^N + xaE_N.$$ By this and (i) we obtain
\begin{multline*}\|(xa)^{N+1}\|\le \| x^{N+1}a^{N+1}\| + \|[a, x^N]\| + \|E_N\| \le \| x^{N+1}a^{N+1}\| + (N+ \frac{(N-1)N}{2})\delta \\ \le \| x^{N+1}a^{N+1}\| +  \frac{(N+1)N}{2}\delta.\end{multline*}

\end{proof}

\begin{lemma}\label{Main01} For any $\epsilon > 0$ there is a $\delta >0$ such that whenever $e\in (B(H))_{+, 1} $ is of rank not larger than $N$ and $x\in (B(H))_1$ with $x^{N+1}=0$, then $$\|[x, e]\|\le \delta \;\;\; \Rightarrow \;\;\; \|ex^N\|\le \epsilon \;\text{and}\; \|x^Ne\|\le \epsilon.$$
\end{lemma}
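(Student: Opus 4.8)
The statement of Lemma~\ref{Main01} asserts a uniform bound: the approximate commutation $\|[x,e]\|\le\delta$ between a nilpotent contraction $x$ of order $N+1$ and a positive contraction $e$ of rank $\le N$ forces $\|ex^N\|$ and $\|x^Ne\|$ to be small, uniformly in $H$, $e$, $x$. The plan is to argue by contradiction and compactness: if the statement fails, then for some $\epsilon>0$ there are Hilbert spaces $H_j$, rank-$\le N$ positive contractions $e_j$ and nilpotent contractions $x_j$ with $x_j^{N+1}=0$, $\|[x_j,e_j]\|\to 0$, but $\|e_jx_j^N\|\ge\epsilon$ (or the other one; the two cases are symmetric by taking adjoints, since $(x^N e)^* = e x^{*N}$ and $x^*$ is also nilpotent of order $N+1$). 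I would then pass to an ultraproduct to produce an honest nilpotent of order $N+1$ that commutes with a genuine rank-$\le N$ projection-like positive element and still witnesses $\|e x^N\|\ge\epsilon$ — and derive a contradiction from a rank count.

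\textbf{The ultraproduct / finite-rank reduction.} Since each $e_j$ has rank $\le N$, compress everything to the range of $e_j$ enlarged by the (finite-dimensional) support needed to see $x_j^N e_j$; more carefully, I would replace $H_j$ by a finite-dimensional subspace containing $\mathrm{ran}(e_j)$ and a unit vector $\xi_j$ with $\|e_j x_j^N\xi_j\|\ge\epsilon$, together with its $x_j$-orbit — this is at most an $(N+2)$-dimensional enlargement, so we land inside $M_{d_j}$ with $d_j\le 2N+2$, say, after cutting down by the spectral projection of $e_j$ for eigenvalues $\ge$ some small threshold and absorbing the error (using that small spectral parts of $e_j$ contribute little to $\|e_j x_j^N\|$). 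Now $d_j$ is bounded, so after passing to a subsequence it is constant $=d$, and the matrices $e_j, x_j$ converge along an ultrafilter to $e\in (M_d)_{+,1}$ of rank $\le N$ and $x\in(M_d)_1$ with $x^{N+1}=0$ (closedness of the nilpotency relation in finite dimensions, or Corollary~\ref{StabilityOfNilpotents}), with $[x,e]=0$ and $\|e x^N\|\ge\epsilon>0$.

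\textbf{The contradiction via a rank count.} The heart of the matter is: a nilpotent $x\in M_d$ of order $N+1$ commuting with a positive $e$ of rank $\le N$ must satisfy $e x^N = 0$. Since $[x,e]=0$, also $[x^N,e]=0$, so $e$ commutes with $x^N$. Now $x^{N+1}=0$ with $x$ nilpotent of order exactly $N+1$ on a space where such an $x$ lives forces $\dim H \ge N+1$ on the relevant cyclic piece; more precisely, in a Jordan-type analysis, $x^N\neq 0$ requires a single Jordan block of size $\ge N+1$, and then $\mathrm{ran}(x^N)\subseteq\ker(x)$ is one-dimensional spanned by a vector $v$ which lies in a generalized eigenspace where $x$ acts as a single nilpotent Jordan block of size $\ge N+1$. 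Since $e$ commutes with $x$, $e$ preserves each generalized eigenspace of $x$; restricted to the block of size $\ge N+1$, a commuting operator is a polynomial in the nilpotent Jordan block, hence is either invertible on that block or has rank $\ge N$ there already (a non-invertible polynomial in a size-$(N+1)$ nilpotent Jordan block $J$ has the form $c_1 J + \dots$, whose rank is exactly the size minus the valuation, so $\ge 1$; one checks $e v \neq 0$ precisely when the constant term is nonzero). The clean way: $e x^N = x^N e$, and if $e v = 0$ for $v$ spanning $\mathrm{ran}(x^N)$, then since $e\ge 0$ commutes with $x$, the vector $v$ together with $xv, \dots, x^N v$ spans an $(N+1)$-dimensional $e$-reducing subspace on which the compression of $e$ is a positive contraction killing $v=x^N w$; but the compressions of $e$ on this Jordan block form a Toeplitz-type positive matrix that is forced to have rank $N+1$ unless it is zero — so $e$ restricted there is either $0$ (contradicting $\|ex^N\|\ge\epsilon$, as $ex^N$ is supported there) or has rank $\ge N+1$, contradicting $\mathrm{rank}(e)\le N$. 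Either way we reach the contradiction, proving the lemma. The main obstacle is making this last linear-algebra step fully rigorous and $H$-independent — specifically handling the case where $x$ is not a single Jordan block but $x^N\neq 0$ still, which forces exactly one block of size $N+1$ and all others of size $\le N+1$, so $x^N$ still has rank $1$ and the argument localizes to that one block; I would phrase it so the ultrafilter limit reduces everything to this single finite-dimensional, single-block situation.
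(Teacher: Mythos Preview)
Your compactness strategy is sound and genuinely different from the paper's direct argument, but the execution has two real gaps.

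First, the finite-dimensional reduction does not work as written. The subspace you describe --- $\mathrm{ran}(e_j)$ together with the $x_j$-orbit of a single witness vector $\xi_j$ --- is not $x_j$-invariant (vectors in $\mathrm{ran}(e_j)$ need not stay in it under $x_j$), so the compression $P_Vx_jP_V$ need neither remain nilpotent of order $N+1$ nor approximately commute with $e_j$. The fix: take instead the $x_j$-\emph{invariant} subspace $V$ generated by $\mathrm{ran}(e_j)\cup\{\xi_j\}$. Since $x_j^{N+1}=0$ this has dimension at most $(N+1)^2$, still uniformly bounded; $e_j$ is then reduced by $V$, and the restrictions $x_j|_V,\,e_j|_V$ inherit all the required properties. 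Second, your Jordan-block analysis is wrong: $x^{N+1}=0$ with $x^N\neq 0$ does \emph{not} force a single block of size $N+1$ (there can be many, so $\mathrm{ran}(x^N)$ need not be one-dimensional), and ``$e$ preserves each generalized eigenspace of $x$'' is vacuous for nilpotent $x$. The statement you actually need is much easier: since $e$ is self-adjoint and $[x,e]=0$ in finite dimensions, $x$ commutes with every spectral projection of $e$, in particular with its support projection $p$ of rank $\le N$. Then $x$ is block-diagonal for $p$, so $px^N=(pxp)^N$, and $pxp$ is nilpotent on the at most $N$-dimensional space $pH$, whence $(pxp)^N=0$ and $ex^N=ep\cdot x^N=0$ --- the desired contradiction.

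For comparison, the paper avoids compactness altogether. It uses Pedersen's commutator inequality to pass from $\|[x,e]\|\le\delta$ to control of $\|[x,e^{1/2N}]\|$, then applies stability of nilpotents (Corollary~\ref{StabilityOfNilpotents}) to perturb the almost-nilpotent $e^{1/2N}xe^{1/2N}$ to an honest order-$(N+1)$ nilpotent $y$ living in $e^{1/2N}B(H)e^{1/2N}$; since $\mathrm{rank}\,y\le\mathrm{rank}\,e\le N$, already $y^N=0$, and unwinding via Lemma~\ref{trivial} gives $\|ex^N\|\le\epsilon$. Both proofs hinge on the same rank observation, but the paper's route yields an explicit $\delta(\epsilon,N)$, whereas yours (once repaired) gives only existence.
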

\begin{proof} By stability of nilpotent contractions (Corollary \ref{StabilityOfNilpotents}) there exists $\delta_0$ such that whenever $z$ is an element of any $C^*$-algebra with $\|z^{N+1}\|\leq\delta_0$ and $\|z\|\le 1$, there is an element $y$ in the same C*-algebra with $y^{N+1}=0, \|y\|\le 1, \|y-z\|\le \epsilon/2N.$
Let $$\delta_1 = \min\{\frac{\delta_0}{N(N+1)}, \frac{\epsilon}{2N(3N-2)}\}$$ and let
$$\delta = \left(\frac{4\delta_1}{5}\right)^{2N}.$$
By \cite{PedersenInequality}, the assumption $\|[x, e]\|\le \delta $ implies that $\|[x, e^{\frac{1}{2N}}]\|\le \delta_1.$
By the statement (iii) of Lemma \ref{trivial}
$$\|\left(e^{\frac{1}{2N}}xe^{\frac{1}{2N}}\right)^{N+1}\| \le N(N+1)\delta_1\le \delta_0.$$
Therefore there is $y\in e^{\frac{1}{2N}}B(H)e^{\frac{1}{2N}}$ such that
\begin{equation}\label{MainLemma1}y^{N+1}=0, \|y\|\le 1, \|y-e^{\frac{1}{2N}}xe^{\frac{1}{2N}}\|\le \epsilon/2N.\end{equation}
Since $y\in e^{\frac{1}{2N}}B(H)e^{\frac{1}{2N}}$, it is a nilpotent on a Hilbert space of dimension  $ rank  \; e \le N$ and we conclude that
\begin{equation}\label{MainLemma2}y^N=0.\end{equation}
By (\ref{MainLemma1}), (\ref{MainLemma2}) and the statement (iv) of Lemma \ref{trivial}
\begin{equation}\label{MainLemma3} \|\left(e^{\frac{1}{2N}}xe^{\frac{1}{2N}}\right)^N\| = \|\left(e^{\frac{1}{2N}}xe^{\frac{1}{2N}}\right)^N - y^N\| \le N\epsilon/2N = \epsilon/2.\end{equation}
By the statement (ii) of Lemma \ref{trivial}
\begin{equation}\label{MainLemma4} \left(e^{\frac{1}{2N}}xe^{\frac{1}{2N}}\right)^N = A + e^{\frac{1}{2N}}x^Ne^{\frac{2N-1}{2N}},\end{equation}
where $\|A\| \le N(N-1)\delta_1.$
By (\ref{MainLemma3}) and (\ref{MainLemma4})
\begin{equation}\label{MainLemma6} \|e^{\frac{1}{2N}}x^Ne^{\frac{2N-1}{2N}}\|\le \|A + e^{\frac{1}{2N}}x^Ne^{\frac{2N-1}{2N}}\| + \|A\| \le \epsilon/2  + N(N-1)\delta_1.\end{equation} Now using (\ref{MainLemma6}) and the statement (i) of Lemma \ref{trivial} we obtain
$$\|ex^N\| = \|e^{\frac{1}{2N}}x^Ne^{\frac{2N-1}{2N}} - e^{\frac{1}{2N}}[x^N, e^{\frac{2N-1}{2N}}]\| \le \epsilon/2 + N(N-1)\delta_1 + N(2N-1)\delta_1 \le \epsilon.$$
Similarly $\|x^Ne\|\le \epsilon.$

\end{proof}

\begin{theorem}\label{Main1} If $A$ is type $I_n$, then $F(A)$ is (n+1)-subhomogeneous.
\end{theorem}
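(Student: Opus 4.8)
The plan is to apply the nilpotent characterization of subhomogeneity from Theorem~\ref{Fact}: since $F(A)$ is $(n+1)$-subhomogeneous if and only if it contains no nilpotent element of order $n+2$, I would assume for contradiction that some $x\in F(A)$ satisfies $x^{n+2}=0$ and $x^{n+1}\neq 0$, and derive a contradiction from the hypothesis that $A$ is type $I_n$. First I would set up a convenient representative sequence. As $Ann(A,A_\omega)$ is an ideal of $A_\omega\cap A'$ with quotient $F(A)$, Theorem~\ref{LiftabilityOfNilpotents} lifts $x$ (which may be taken to be a contraction) to $\tilde x\in A_\omega\cap A'$ with $\tilde x^{n+2}=0$ and $\|\tilde x\|\le 1$; since $x^{n+1}\neq 0$, after possibly passing to $\tilde x^*$ there is a contraction $f\in A$ with $\|\tilde x^{n+1}f\|_{A_\omega}>0$. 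Representing $\tilde x$ by a sequence of contractions $(x_j)$ in $A$ and applying Corollary~\ref{StabilityOfNilpotents} together with Lemma~\ref{trivial}(iv), I would perturb the $x_j$ so that, for $\omega$-almost all $j$, one has $x_j^{n+2}=0$ exactly, while keeping $\|[x_j,a]\|\to_\omega 0$ for every $a\in A$ and $\lim_\omega\|x_j^{n+1}f\|=\epsilon_0>0$.

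The second step converts the norming element $f$ into a \emph{positive contraction $e\in A_+$ of global rank at most $n+1$}, without destroying the lower bound. By the definition of type $I_n$ --- using the remark that ``generated'' may be replaced by ``spanned'', since the rank of a product is at most the minimum of the ranks --- the elements of global rank $\le n+1$ have dense linear span in $A$. Choosing a finite combination $f'=\sum_{l=1}^{L}c_l g_l$ of such elements with $\|f-f'\|<\epsilon_0/2$ and applying a pigeonhole argument produces a single $g$ of global rank $\le n+1$ with $\lim_\omega\|x_j^{n+1}g\|>0$; replacing $g$ by $e:=(gg^*)^{1/2}\in A_+$ gives a positive contraction whose rank in every irreducible representation equals that of $g$, hence of global rank $\le n+1$, and the estimate $\|x_j^{n+1}g\|^2=\|x_j^{n+1}e^2(x_j^{n+1})^*\|\le\|x_j^{n+1}e\|^2$ preserves a bound $\lim_\omega\|x_j^{n+1}e\|\ge\epsilon_1>0$.

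The final step is where Lemma~\ref{Main01} does the work. Apply it with $N=n+1$ and tolerance $\epsilon_1/2$ to obtain $\delta>0$; the crucial feature is that this $\delta$ is uniform, in particular independent of the ambient Hilbert space. Hence for every $j$ in the $\omega$-large set $E=\{j:\|[x_j,e]\|\le\delta \text{ and } x_j^{n+2}=0\}$ and \emph{every} irreducible representation $\pi$ of $A$, the data $\pi(e)$ (of rank $\le n+1$), $\pi(x_j)$ (a contraction with $\pi(x_j)^{n+2}=0$), and $\|[\pi(x_j),\pi(e)]\|\le\delta$ verify the hypotheses of Lemma~\ref{Main01}, so $\|\pi(x_j)^{n+1}\pi(e)\|\le\epsilon_1/2$. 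Taking the supremum over a faithful family of irreducible representations gives $\|x_j^{n+1}e\|\le\epsilon_1/2$ for all $j\in E$, whence $\lim_\omega\|x_j^{n+1}e\|\le\epsilon_1/2<\epsilon_1$, contradicting the second step. Therefore $F(A)$ contains no nilpotent of order $n+2$, and Theorem~\ref{Fact} yields the claim.

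I expect the main work to lie in the second step: extracting from the abstract hypothesis ``$A$ is type $I_n$'' a concrete positive contractive element of finite global rank that still norms $x_j^{n+1}$ in the limit. The last step is conceptually short once one observes that the uniformity of $\delta$ in Lemma~\ref{Main01} is exactly what permits running the estimate representation by representation and then passing to the $C^*$-norm, and the nilpotent lifting and perturbation in the first step are routine consequences of Theorem~\ref{LiftabilityOfNilpotents} and Corollary~\ref{StabilityOfNilpotents}.
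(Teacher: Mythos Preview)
Your proposal is correct and follows essentially the same route as the paper: lift the nilpotent to a sequence of exact nilpotent contractions, apply Lemma~\ref{Main01} in every irreducible representation using the global-rank bound, and pass to the supremum over $\hat A$. The only difference is cosmetic---the paper argues directly that $x^{n+1}\in Ann(A,A_\omega)$ by showing $\lim_\omega\|ax_k^{n+1}\|=\lim_\omega\|x_k^{n+1}a\|=0$ for \emph{every} contraction $a$ of global rank $\le n+1$ (passing to $(aa^*)^{1/2}$ and $(a^*a)^{1/2}$ via polar decomposition, exactly as you do for your single $g$), thereby avoiding your contradiction setup and the pigeonhole reduction to a single witness $e$.
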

\begin{proof} Let $x\in F(A)$ be a contraction such that $x^{n+2}=0$. By liftability of nilpotent contractions  (Theorem \ref{LiftabilityOfNilpotents}), we can represent $x$ by a sequence $(x_1, x_2, \ldots)$ with $x_k^{n+2}=0$, $\|x_k\|\le 1$, $k\in \mathbb N$.

Fix a contraction $a\in A$ of global rank not larger than $n+1$ and let $\epsilon >0$. Let $\delta$ be as in Lemma \ref{Main01}. As
$$\lim_{\omega} [x_k, (aa^*)^{\frac{1}{2}}] = \lim_{\omega} [x_k, (a^*a)^{\frac{1}{2}}] = 0, $$ there is $E\in \omega$ such that for any $k\in E$ we have $$\|[x_k, (aa^*)^{\frac{1}{2}}]\|\le \delta \; \text{and} \; \|[x_k, (a^*a)^{\frac{1}{2}}]\|\le \delta.$$ Since $(aa^*)^{1/2}$ and
 $(a^*a)^{1/2}$ are positive contractions and have global rank not larger than $n+1$, by Lemma \ref{Main01} for any irreducible representation $\rho$ of $A$ and any $k\in E$ we have
$$\|\rho(x_k^{n+1})\rho((aa^*)^{1/2})\|\le \epsilon \; \text{and}\;  \|\rho((a^*a)^{1/2})\rho(x_k^{n+1})\|\le \epsilon.$$ Let $\rho(a) = \rho((aa^*)^{1/2})V_{\rho} = W_{\rho}\rho((a^*a)^{1/2})$, where $V_{\rho}, W_{\rho}\in B(H_{\rho})$, be the polar decomposition of $\rho(a)$. Then
\begin{multline*}\|ax_k^{n+1}\| = \sup_{\rho} \|\rho(a)\rho(x_k^{n+1})\| = \sup_{\rho}   \|W_{\rho}\rho((a^*a)^{1/2})\rho(x_k^{n+1})\| \\ \le \|\rho((a^*a)^{1/2})\rho(x_k^{n+1})\|\le \epsilon,\end{multline*} for any $k\in E$.  Similarly $\|x_k^{n+1}a\| \le \epsilon,$ for any $k\in E$. It follows that the set $\{k\in \mathbb N\;|\; \|ax_k^{n+1}\|\le \epsilon \;\text{and}\; \|x_k^{n+1}a\|\le \epsilon \}$ contains $E$ and therefore belongs to $\omega$.
Thus $\lim_{\omega} \|ax_k^{n+1}\| = \lim_{\omega} \|x_k^{n+1}a\|= 0.$ Since this holds for any  contraction $a\in A$ of global rank not larger than $n+1$ and these span $A$ by assumption, we conclude that the image of the sequence $(x_1^{n+1}, x_2^{n+1}, \ldots)$ in $A^{\omega} \bigcap A'$ belongs to $Ann(A, A^{\omega})$. Hence $x^{n+1}=0.$
By Theorem \ref{Fact}, $F(A)$ is $(n+1)$-subhomogeneous.
\end{proof}

  \section{When central sequence algebras are subhomogeneous}

  \begin{theorem}\label{MainResult} Let $A$ be a separable C*-algebra and $k\in \mathbb N$. Then $F(A)$ is $k$-subhomogeneous if and only if $A$ satisfies Fell's condition of order $k$.
  \end{theorem}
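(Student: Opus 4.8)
The plan is to assemble the Main Theorem from the three auxiliary steps already established, with the reduction from type I to CCR being the only real glue that remains. The overall strategy is: prove the ``if'' direction directly from Proposition~\ref{Fell} and Theorem~\ref{Main1}, and prove the ``only if'' direction by a case distinction on whether $A$ is type I, using Theorem~\ref{ContainK(H)} to kill the non-CCR case and Theorem~\ref{SubhomogeneousImpliesFell} to handle the CCR case.

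For the ``if'' direction: suppose $A$ satisfies Fell's condition of order $k$. By Proposition~\ref{Fell}, $A$ is type $I_{k-1}$. Then Theorem~\ref{Main1} (with $n = k-1$) gives that $F(A)$ is $k$-subhomogeneous. This direction is immediate.

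For the ``only if'' direction: suppose $F(A)$ is $k$-subhomogeneous. First I would observe that $A$ must be CCR. Indeed, if $A$ is not CCR, then either $A$ is not type I, or $A$ is type I but has an irreducible representation $\pi$ with $\pi(A) \supsetneq K(H_\pi)$; in the latter case, letting $I = \ker\pi$, the quotient $A/I$ acts irreducibly and contains the compacts strictly, and one checks (using that $F(\cdot)$ passes to quotients appropriately, or by working directly with $A/I \hookrightarrow B(H_\pi)$ and Theorem~\ref{ContainK(H)}) that $F(A/I)$ is not residually type I, hence in particular not $k$-subhomogeneous; but $F(A)$ surjects onto $F(A/I)$, contradicting $k$-subhomogeneity of $F(A)$. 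The non-type-I case is handled similarly: Glimm's theorem produces a subquotient, and Theorem~\ref{ContainK(H)} (or the nilpotent-lifting argument behind Theorem~\ref{Fact}) forbids $k$-subhomogeneity of the central sequence algebra. Once $A$ is known to be CCR, Theorem~\ref{SubhomogeneousImpliesFell} applies directly and yields that $A$ satisfies Fell's condition of order $k$.

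The main obstacle I anticipate is the permanence statement needed in the ``only if'' direction: namely that $k$-subhomogeneity of $F(A)$ is inherited by $F(A/I)$ (equivalently, that $F(A) \to F(A/I)$ is surjective, or at least that a nilpotent of order $k+1$ in $F(A/I)$ lifts to one in $F(A)$). For separable $A$ this kind of functoriality of the central sequence algebra under quotients is standard but needs to be invoked carefully, since $F$ is only functorial for ideals/quotients under mild hypotheses; alternatively one avoids it by noting that the constructions in the proofs of Theorem~\ref{ContainK(H)} and Theorem~\ref{Fact} actually produce approximately central nilpotents \emph{in $A$ itself} (via lifting through $A \to A/I \hookrightarrow B(H)$ and choosing quasicentral approximate units for $A$), so that one directly exhibits a nilpotent of order $k+1$ in $F(A)$. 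I would write the argument that way to keep it self-contained, then conclude by Theorem~\ref{Fact} that $F(A)$ cannot be $k$-subhomogeneous — a contradiction completing the reduction to the CCR case.
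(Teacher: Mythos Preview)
Your plan for the ``if'' direction and for the reduction step ``type I but not CCR $\Rightarrow$ $F(A)$ not $k$-subhomogeneous'' matches the paper's argument exactly; the paper also passes to a quotient $B=\pi(A)\supsetneq K(H)$, applies Theorem~\ref{ContainK(H)}, and then invokes a permanence result (it cites \cite[Rem.~1.15(3)]{Kirchberg} for the fact that $k$-subhomogeneity of $F(A)$ forces $k$-subhomogeneity of $F(A/I)$). Your discussion of that permanence issue is more careful than the paper's one-line citation, which is fine.

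The gap is in your treatment of the non-type-I case. The paper does \emph{not} deduce this from Theorem~\ref{ContainK(H)} or from any argument internal to the paper; it invokes the Ando--Kirchberg theorem \cite[Th.~1.1]{AndoKirchberg}, noting that their proof actually shows $F(A)$ has a type~III factor as a subquotient (hence $F(A)$ is not even type~I, let alone $k$-subhomogeneous). Your proposed substitute---``Glimm's theorem produces a subquotient, and Theorem~\ref{ContainK(H)} (or the nilpotent-lifting argument behind Theorem~\ref{Fact}) forbids $k$-subhomogeneity''---does not work as stated. Glimm's theorem yields the CAR algebra as a \emph{sub}quotient, not a quotient, and the CAR algebra is not of the form $B\supsetneq K(H)$, so Theorem~\ref{ContainK(H)} does not apply. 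The proof of Theorem~\ref{Fact} lifts nilpotents from a subquotient back to $A$, but there is no mechanism there to make those lifts approximately central; producing high-order nilpotents in $A$ says nothing about $F(A)$. You therefore need either to cite Ando--Kirchberg explicitly (as the paper does) or to supply a genuinely new argument showing that $F(A)$ is not $k$-subhomogeneous when $A$ is not type~I.
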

  \begin{proof}
{\bf "Only if"}: In [\cite{AndoKirchberg}, Th.1.1] Ando and Kirchberg proved that if $A$ is a non-type I C*-algebra, then $F(A)$ is not abelian. We note that their proof in fact shows even  that  $F(A)$ is not type I, because they proved that it contains a type III factor as a subquotient [\cite{AndoKirchberg}, p.7].

Hence the assumption that $F(A)$ is $k$-subhomogeneous implies that $A$ is type I. If $A$ was not CCR, then it would quotient onto a C*-algebra $B\subseteq B(H)$ such that $B\supset K(H)$. By Theorem \ref{ContainK(H)}, $F(B)$ is not residually type I and hence not $k$-subhomogeneous. By [\cite{Kirchberg}, Rem. 1.15 (3)],  $F(A)$ would not be $k$-subhomogeneous either. Thus $A$ must be CCR.
  The statement  follows now from  Theorem \ref{SubhomogeneousImpliesFell}.

  {\bf "If"}: By Theorem \ref{Main1}   and the "only if" part of Proposition \ref{Fell}.
  \end{proof}

  Now we can answer a question raised by J. Phillips (\cite{Phillips}) and by Ando and Kirchberg (\cite{AndoKirchberg}).

   \begin{corollary}\label{hypercentral} Let $A$ be a  separable C*-algebra. The following are equivalent:

 (1) $F(A)$ is abelian;

 (2) All central sequences in $A$ are hypercentral;

 (3) $A$ satisfies Fell's condition.
 \end{corollary}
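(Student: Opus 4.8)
The plan is to derive all three equivalences from the Main Theorem (Theorem~\ref{MainResult}) specialized to $k=1$, together with the bookkeeping provided by Proposition~\ref{epsilon-delta}.

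First I would handle $(1)\Leftrightarrow(3)$. The point here is simply that a C*-algebra is commutative precisely when it is $1$-subhomogeneous (an algebra is abelian if and only if all its irreducible representations are one-dimensional), and that ``Fell's condition of order $1$'' is nothing but the classical Fell's condition: for $k=1$ the requirement that $\pi(b)$ have rank at most $1$ together with $\pi_0(b)\neq 0$ forces $\pi(b)$ to have rank exactly one on the open subneighbourhood $\{\pi:\pi(b)\neq 0\}$ of $\pi_0$, and by Remark~\ref{RemarkProjection} (or by Proposition~\ref{Fell} with $n=1$, giving type $I_0$, which coincides with the class of Fell algebras) this recovers the original definition with a rank-one projection. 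Granting this, $(1)\Leftrightarrow(3)$ is exactly the case $k=1$ of Theorem~\ref{MainResult}.

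Next I would prove $(1)\Leftrightarrow(2)$ by unwinding the definitions. Fix two contractions $x=(x_n)$, $y=(y_n)$ in $A_\infty$ representing central sequences, so that $\pi_\omega(x),\pi_\omega(y)\in A'\cap A_\omega$. Their images in $F(A)=\big(A'\cap A_\omega\big)/Ann(A,A_\omega)$ commute if and only if $[\pi_\omega(x),\pi_\omega(y)]\in Ann(A,A_\omega)$, i.e. $\|a[x_n,y_n]\|+\|[x_n,y_n]a\|\to_\omega 0$ for every $a\in A$. When $\omega$ is the filter of all cofinite sets, $\to_\omega$ is ordinary convergence, and this is precisely the statement that the central sequence $(x_n)$ (equivalently $(y_n)$) is hypercentral. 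Running this over all pairs of central sequences shows that $F(A,\omega)$ is abelian, for $\omega$ the cofinite filter, if and only if every central sequence in $A$ is hypercentral. Finally, since by Proposition~\ref{epsilon-delta} abelianness of $F(A,\omega)$ does not depend on the choice of filter $\omega$ containing the cofinite filter, ``$F(A)$ is abelian'' in statement $(1)$ is equivalent to abelianness of $F(A)$ with respect to the cofinite filter, which closes the loop.

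I do not expect a genuine obstacle here: the entire mathematical content has been absorbed into Theorem~\ref{MainResult}, and the corollary is a matter of reading off the case $k=1$ and translating ``abelian central sequence algebra'' into the classical language of hypercentral sequences. The only points requiring a little care are (a) checking that Fell's condition of order $1$ literally coincides with Fell's condition, and (b) keeping the filter conventions straight, i.e. invoking Proposition~\ref{epsilon-delta} so that the definition of hypercentrality, which uses ordinary convergence, is compared against the correct $F(A,\omega)$.
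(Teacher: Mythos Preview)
Your proposal is correct and follows essentially the same approach as the paper: the paper's proof simply cites Theorem~\ref{MainResult}, Proposition~\ref{epsilon-delta}, and the paragraph preceding it, which is exactly the content you have spelled out. The only difference is that you have made explicit the identifications ``abelian $=$ $1$-subhomogeneous'' and ``Fell's condition of order $1$ $=$ Fell's condition,'' which the paper leaves implicit.
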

 \begin{proof} Follows from Theorem \ref{MainResult}, Proposition \ref{epsilon-delta} and the paragraph preceding Proposition \ref{epsilon-delta}.
 \end{proof}

  \medskip

  Recall that an automorphism $\alpha$ of a unital C*-algebra $A$ is {\it inner} if there is a unitary $u\in A$ such that
  $\alpha(a) = u^*xu$, for any $a\in A$. An automorphism is {\it approximately inner} if it is a pointwise limit of inner automorphisms.
  The set of all approximately inner automorphisms of $A$ will be denoted by $\overline{Aut \;A}$.

  An automorphism $\alpha$ is {\it centrally trivial} if $\|\alpha(x_n) - x_n\| \to 0,$ for every central sequence $\{x_n\}$ of $A$ (\cite{Phillips}).
  The set of all centrally trivial automorphisms of $A$ is denoted by $Ct \;A$.

  \begin{corollary} Let $A$ be a unital separable C*-algebra that satisfies Fell's condition. Then any approximately inner automorphism of $A$ is centrally trivial.
  \end{corollary}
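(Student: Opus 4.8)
The plan is to reduce everything to Corollary~\ref{hypercentral}. Since $A$ satisfies Fell's condition, $F(A)$ is abelian; equivalently, every central sequence in $A$ is hypercentral. What remains is to show that \emph{a spectrum-fixing automorphism of a unital Fell algebra is centrally trivial}. The first (easy) step is to check that an approximately inner automorphism $\alpha=\lim_m\operatorname{Ad}(u_m)$ fixes every primitive ideal $P$ of $A$: for $a\in P$ one has $\alpha(a)=\lim_m u_m^{*}au_m\in\overline P=P$, and symmetrically $\alpha^{-1}(P)\subseteq P$, so $\alpha(P)=P$. Hence $\alpha$ induces the identity on $\operatorname{Prim}(A)$ and on $\hat A$, so for each $\pi\in\hat A$ the representation $\pi\circ\alpha$ is unitarily equivalent to $\pi$; fixing a unitary $V_\pi$ on $H_\pi$ with $\pi\circ\alpha=\operatorname{Ad}(V_\pi)\circ\pi$ we get, for all $x\in A$, the identity $\|\pi(\alpha(x)-x)\|=\|[V_\pi,\pi(x)]\|$, and therefore $\|\alpha(x)-x\|=\sup_{\pi\in\hat A}\|[V_\pi,\pi(x)]\|$.

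Now fix a central sequence $(x_n)$; the goal is $\sup_{\pi}\|[V_\pi,\pi(x_n)]\|\to 0$. This is where the local structure of Fell algebras enters. Using Fell's condition together with quasi-compactness of $\hat A$ (recall $A$ is unital) one covers $\hat A$ by finitely many open sets $U_1,\dots,U_r$ carrying $b_1,\dots,b_r\in A^{+}$ with $\pi(b_i)$ a rank-one projection whenever $\pi\in U_i$. Since $(x_n)$ is central, $\max_i\|[x_n,b_i]\|\to 0$, so for $\pi\in U_i$ the operator $\pi(x_n)$ nearly commutes with the rank-one projection $\pi(b_i)$, with an error that is $o(1)$ \emph{uniformly in} $\pi$; bootstrapping this over a finite subcover (and using the locally bounded fibre dimensions of a unital Fell algebra), one concludes that $\pi(x_n)$ is within $o(1)$, uniformly in $\pi$, of a scalar multiple $\lambda_n(\pi)\,1_{H_\pi}$. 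Consequently $[V_\pi,\pi(x_n)]=\lambda_n(\pi)[V_\pi,1_{H_\pi}]+o(1)=o(1)$ uniformly in $\pi$, i.e.\ $\|\alpha(x_n)-x_n\|\to 0$. Finally, central triviality is asked with respect to the filter of cofinite sets; by Proposition~\ref{epsilon-delta} and the remark preceding it this coincides with $F(A)$ being abelian, which holds here for \emph{every} filter containing the cofinite one, so there is no ambiguity.

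The main obstacle is the claim that $\pi(x_n)$ is \emph{uniformly} a local scalar. Knowing only that $\hat\alpha=\operatorname{id}$ is not enough, because the unitaries $V_\pi$ genuinely move the local units: $V_\pi\pi(b_i)V_\pi^{*}=\pi(\alpha(b_i))\neq\pi(b_i)$ in general. It is therefore essential that the error terms in the previous paragraph are controlled uniformly in $\pi$ by $\|[x_n,b_i]\|\to 0$, which uses both that $x_n$ is a genuine member of a central sequence and that the covering data is finite. A clean way to package this step is to observe that for a separable unital Fell algebra $F(A)$ is naturally an algebra of bounded ``scalar fields'' on $\hat A$ whose only extra coordinates record the behaviour of $\hat A$ at infinity, on which any spectrum-fixing automorphism — in particular any approximately inner one — acts trivially; this yields $F(\alpha)=\operatorname{id}$, i.e.\ $\alpha\in\operatorname{Ct} A$. (Alternatively, one can argue that for a unital Fell algebra every approximately inner automorphism is already inner, after which the statement follows from the elementary fact, noted above, that inner automorphisms are centrally trivial.)
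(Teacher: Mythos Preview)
Your opening reduction is correct and matches the paper: Corollary~\ref{hypercentral} gives that every central sequence in $A$ is hypercentral. At that point, however, the paper finishes in one line by invoking \cite[Lemma~3.5]{Phillips}, which states for any unital separable $C^*$-algebra that if all central sequences are hypercentral then $\overline{\mathrm{Inn}\,A}\subseteq \mathrm{Ct}\,A$. No fibrewise analysis and no special structure of Fell algebras is used beyond Corollary~\ref{hypercentral}.

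Your attempted direct argument, by contrast, has a genuine gap at the ``bootstrapping'' step. Almost commuting with a single rank-one projection $\pi(b_i)$ only forces $\pi(x_n)$ to be approximately block-diagonal with respect to the decomposition $\pi(b_i)H_\pi\oplus(1-\pi(b_i))H_\pi$; it says nothing about the (possibly large) complementary block. Passing to a finite cover does not help: for a fixed $\pi$ you only know that $\pi(b_i)$ has rank one for \emph{those} $i$ with $\pi\in U_i$, and there is no reason these finitely many rank-one projections should span $H_\pi$ or even have ranks summing to $\dim H_\pi$. In particular, the phrase ``locally bounded fibre dimensions'' is doing no real work here --- Fell's condition provides a rank-one projection in a neighbourhood, not a bound on $\dim H_\pi$. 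What one would actually need is the much stronger (and unproved) statement that central sequences are \emph{uniformly} close to scalars fibrewise; this is essentially a structural description of $F(A)$ that goes well beyond ``$F(A)$ is abelian''. Your two closing alternatives do not repair this: the ``scalar fields on $\hat A$'' description of $F(A)$ is asserted, not proved, and the claim that approximately inner automorphisms of unital Fell algebras are already inner is false in general (already for homogeneous algebras $C(X,M_n)$ one obtains non-inner approximately inner automorphisms from non-liftable maps $X\to PU(n)$).

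The clean fix is simply to cite \cite[Lemma~3.5]{Phillips} after Corollary~\ref{hypercentral}, exactly as the paper does; if you want to avoid that citation you would need to supply an independent proof that hypercentrality of all central sequences forces $\overline{\mathrm{Inn}\,A}\subseteq \mathrm{Ct}\,A$, which is a general fact about central sequence algebras rather than something to be extracted from the local Fell data.
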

  \begin{proof} By [\cite{Phillips}, Lemma 3.5] if all central sequences in $A$ are hypercentral, then $\overline{Inn \;A}\subseteq Ct \;A$. The statement follows  now from Corollary \ref{hypercentral}.
  \end{proof}

  \begin{corollary} If a C*-algebra $A$ satisfies Fell's condition but does not have continuous trace, then $A$ has an outer derivation.
  \end{corollary}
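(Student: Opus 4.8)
The plan is to reduce the statement to Corollary~\ref{hypercentral} together with the analysis of central sequences and inner derivations carried out by Akemann and Pedersen in \cite{AkemannPedersen}. We take $A$ separable, as throughout the paper. Since $A$ satisfies Fell's condition, Corollary~\ref{hypercentral} tells us that $F(A)$ is abelian, equivalently that every central sequence in $A$ is hypercentral. On the other hand, by the theorem of Akemann and Pedersen quoted in the Introduction, $A$ has \emph{trivial} central sequence algebra precisely when it has continuous trace; so the hypothesis that $A$ does not have continuous trace supplies a non-trivial central sequence, and by the previous sentence that sequence is automatically hypercentral. Thus the task is to manufacture an outer derivation of $A$ out of a non-trivial hypercentral sequence — which is exactly the kind of passage effected in \cite{AkemannPedersen}. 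If one is content to cite that work, the proof is then one line: were every derivation of $A$ inner, then $A$, having abelian central sequence algebra, would have continuous trace by Akemann--Pedersen, contradicting the hypothesis.

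For a more self-contained route I would argue as follows. A Fell algebra with Hausdorff spectrum has continuous trace, so our $A$ has non-Hausdorff spectrum; fix irreducible representations $\pi_1\neq\pi_2$ that cannot be separated in $\hat A$, together with a net $(\rho_\lambda)$ converging to both. Using Fell's condition, pick $b_1,b_2\in A^+$ that restrict to continuous fields of rank-one projections on open neighbourhoods of $\pi_1$ and of $\pi_2$ respectively. Along the overlap of these neighbourhoods one can form a self-adjoint multiplier $m\in M(A)$ by taking a scalar multiple of such a field, but choosing the scalar to oscillate (rather than converge) as one runs out to the ``double point''; then $\operatorname{ad}(m)$ is a \emph{bounded} derivation of $A$ (the commutators $[m,a]$ stay in $A$ and converge to $0$ over the non-Hausdorff locus), yet $m$ cannot be brought, modulo the center of $M(A)$, into $A$ itself — any such correction would have to cancel the oscillation — so $\operatorname{ad}(m)$ is outer. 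The model case is Example~4: for $A=\{f\in C([0,1],M_2):f(1)\text{ diagonal}\}$ the multiplier $m(t)=\operatorname{diag}\!\big(\sin\tfrac{1}{1-t},0\big)$ for $t<1$, $m(1)=0$, gives a bounded derivation $[m,\cdot\,]$ of $A$ which is not implemented by any element of $M(A)=A$, hence is outer.

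The hard part will be this last construction carried out in full generality: one must describe the local structure of a Fell algebra in a neighbourhood of a non-Hausdorff point of $\hat A$ accurately enough to glue the local rank-one projections into an honest multiplier of $A$, verify that the resulting map is a genuine bounded derivation (not merely densely defined), and — the delicate point — show that no inner perturbation can absorb the oscillation, so that the derivation is provably outer. Everything else in the proof is a formal consequence of Corollary~\ref{hypercentral} and the Akemann--Pedersen characterization of continuous trace among separable C*-algebras, so if we simply invoke \cite{AkemannPedersen} for the central-sequence-to-outer-derivation step, the corollary follows immediately.
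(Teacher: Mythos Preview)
Your high-level approach matches the paper's: combine Corollary~\ref{hypercentral} (Fell $\Rightarrow$ $F(A)$ abelian) with an external result that extracts an outer derivation from a non-trivially abelian central sequence algebra. The paper's proof is literally one line, but it cites \cite{Lupini}, Prop.~5.5, not \cite{AkemannPedersen}. What Akemann--Pedersen prove (and what the Introduction quotes) is the equivalence ``continuous trace $\Leftrightarrow$ trivial central sequence algebra''; the stronger contrapositive you invoke, namely ``all derivations inner and $F(A)$ abelian $\Rightarrow$ continuous trace'', is not stated there in that form. The passage from a non-trivial hypercentral sequence to an outer derivation is exactly what Lupini packages in the cited proposition, so your one-line route is correct once the reference is swapped.

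Your second, constructive route is a different argument altogether and is not what the paper does. The model example is fine (and your check that $[m,\,\cdot\,]$ lands in $A$ is correct), but as you yourself note, turning this into a proof for an arbitrary Fell algebra without continuous trace requires building a genuine multiplier from local rank-one data near a non-Hausdorff point and then proving outerness; that is substantial work and is entirely avoided by the citation. So: keep the first paragraph, replace the Akemann--Pedersen attribution by \cite{Lupini}, Prop.~5.5, and drop the sketch.
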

  \begin{proof} Follows from [\cite{Lupini}, Prop.5.5] and Corollary \ref{hypercentral}.
  \end{proof}


\begin{thebibliography}{0000000}

\bibitem{AkemannPedersen}
C. A. Akemann and G. K. Pedersen, Central sequences and inner derivations of separable
C*-algebras, Amer. J. Math. 101(5): 1047-1061, 1979.


\bibitem{AndoKirchberg} H. Ando and E. Kirchberg, Non-commutativity of the central sequence algebra for separable non-type I C*-algebras, J. London Math. Soc. 94(1): 280-294,  2016.

    \bibitem{VoiculescuNormal}  C. Apostol, C. Foias, D. Voiculescu, On the norm-closure of nilpotents. II, Rev. Roumaim Math.
Pures Appl. 19: 549 - 577, 1974.

\bibitem{Archbold}
R. J. Archbold, Upper and lower multiplicity for irreducible representations of
C*-algebras, Proc. London Math. Soc. 69(3): 121 - 143, 1994.

\bibitem{ArchboldSomersetSpielberg}
R. J. Archbold, D. W. Somerset and J. S. Spielberg, J. Funct. Anal. 146: 430 - 463,  1997. 


\bibitem{Blackadar}
B. Blackadar, Operator Algebras: Theory of C*-Algebras and von Neumann Algebras,
Encyclopaedia of Mathematical Sciences, vol. 122, Springer-Verlag, Berlin, 2006.


\bibitem{BrownOzawa}
N. P. Brown and N. Ozawa, C*-algebras and finite-dimensional appriximations, Graduate Studies in Mathematics  88, 2008.

\bibitem{ELP}		S. Eilers, T.A. Loring, and G.K. Pedersen, Stability of anticommutation
relations. An application of noncommutative CW complexes,
{\it J. Reine Angew. Math.}, 499:101 - 143, 1998.



\bibitem{Glimm}
J. Glimm, Type I  C*-algebras, Ann. Math. 73: 572-612, 1961.

\bibitem{Ioana}
A. Ioana,  P. Spaas, $II_1$-factors with exotic central sequence algebras,  J. Institute of Mathematics of Jussieu, 1-26, 2019.



\bibitem{HadwinCharacterizationSubhomogeneous}
D. Hadwin, Noncommutative Arzela-Ascoli theorems, J. Operator Theory 38(1): 43-66, 1997.



\bibitem{FellProperty} A. an Huef, A. Kumjian, A. Sims, A Dixmier-Douady theorem for Fell algebras,   J. Funct. Anal. 260(5): 1543-1581, 2011.



 \bibitem{Kirchberg} E. Kirchberg, Central sequences in C*-algebras and strongly purely infinite algebras, In: Operator Algebras. Abel Symposia, vol. 1. Springer, Berlin, Heidelberg, 2006.



 \bibitem{LoringShulmanNilpVar} T. Loring and T. Shulman, Noncommutative Semialgebraic Sets in Nilpotent Variables,   New York J. Math. 18:361--372, 2012.


\bibitem{Lupini}
M. Lupini, Unitary equivalence of automorphisms of separable C*-algebras, Adv. Math. 262: 1002-1034,  2014.



\bibitem{OlsenPedersen}
C. Olsen and G. K. Pedersen, Corona C*-algebras and their applications to lifting problems.
Math. Scand., 64(1): 63-86, 1989.



\bibitem{PedersenInequality}
G. K. Pedersen, A commutator inequality, in Operator Algebras, mathematical physics, and low-dimensional topology (Istanbul, 1991), vol.5 of Res. Notes Math. 5: 233 - 235, A K Peters. Wellesley, MA, 1993.


\bibitem{PhDthesis}
H. Pendharkar, Central sequences and C*-algebras, PhD Dissertation, University of New
Hampshire, 1999.

\bibitem{Phillips}
J. Phillips, Central sequences and automorphisms of C*-algebras, Amer. J. Math. 110(6):1095-1117, 1988.



\bibitem{NilpotentContraction}
T. Shulman, Lifting of nilpotent contractions, Bull. London Math. Soc.
40(6):1002-1006, 2008.

\bibitem{ContinuitySpectralRadius}
T. Shulman, Continuity of spectral radius and type I C*-algebras, Proc. Amer. Math. Soc. 147:  641 - 646, 2019.


\bibitem{ShT}
V. S. Shulman and Yu. V. Turovskii, Topological radicals, V. From algebra to spectral theory, Algebraic methods in functional analysis,  Oper. Theory Adv. Appl. 233:171 - 280, Birkhuser/Springer,
Basel, 2014.


\bibitem{Skoufranis}
P. Skoufranis, Normal limits of nilpotent operators in C*-algebras, J. Operator Theory 72(1):135-158, 2014.




\bibitem{TomsWinter}
A. Toms and W. Winter, Strongly self-absorbing C*-algebras, Transactions of the American Mathematical Society 359(8): 3999-4029,  2007.










\end{thebibliography}
\end{document}